\newcommand\blfootnote[1]
\renewcommand\thefootnote{}\footnote{#1}\addtocounter{footnote}{-1}\endgroup}
\theoremstyle{plain}
\newtheorem{theorem}{Theorem}[section]
\newtheorem{lemma}[theorem]{Lemma}
\newtheorem{corollary}[theorem]{Corollary}
\newtheorem{fact}{Fact}[subsection]
\theoremstyle{definition}
\newtheorem{definition}[theorem]{Definition}
\newtheorem{question}[theorem]{Question}
\newtheorem{problem}[theorem]{Problem}
\newcommand{\NN}{\mathbb{N}}
\newcommand{\ZZ}{\mathbb{Z}}
\newcommand{\RR}{\mathbb{R}}
\newcommand{\CC}{\mathbb{C}}
\newcommand{\KK}{\mathbb{K}}
\newcommand{\Lc}{\mathcal{L}}
\newcommand{\cl}{\overline}
\newcommand{\lspan}{\operatorname{span}}
\newcommand{\res}{\vert}
\newcommand{\eps}{\varepsilon}
\newcommand{\sbf}{\boldsymbol}
\begin{document}
\begin{center}
	\begin{LARGE}
		{\bf Shifts on trees versus classical shifts in chain recurrence}
	\end{LARGE}
\end{center}

\begin{center}
	\begin{Large}
		Antoni L\'opez-Mart\'inez and Dimitris Papathanasiou\blfootnote{\textbf{2020 Mathematics Subject Classification}: 47B37, 37B65, 37B20.\\ \textbf{Key words and phrases}: Linear dynamics, Chain recurrence, Fr\'echet sequence spaces, Weighted shift operators.\\ \textbf{Journal-ref}: Journal of Differential Equations, Volume 433, article number 113230, (2025).\\ \textbf{DOI}: https://doi.org/10.1016/j.jde.2025.113230}
	\end{Large}
\end{center}


\begin{abstract}
	We construct continuous (and even invertible) linear operators acting on Banach (even Hilbert) spaces whose restrictions to their respective closed linear subspaces of chain recurrent vectors are not chain recurrent operators. This construction completely solves in the negative a problem posed by Nilson C.\ Bernardes Jr.\ and Alfred Peris on chain recurrence in Linear Dynamics. In particular: we show that the non-invertible case can be directly solved via relatively simple weighted backward shifts acting on certain unrooted directed trees; then we modify the non-invertible counterexample to address the invertible case, but falling outside the class of weighted shift operators; and we finally show that this behaviour cannot be achieved via classical (unilateral neither bilateral) weighted backward sifts (acting on $\mathbb{N}$ and $\mathbb{Z}$ respectively) by noticing that a classical shift is a chain recurrent operator as soon as it admits a non-zero chain recurrent vector.
\end{abstract}


\section{Introduction}

This paper concerns Linear Dynamics, that is, the branch of mathematics which studies the orbits generated by the iterations of {\em linear operators} acting on {\em topological vector spaces}, combining the classical field of Topological Dynamics with those of Functional Analysis and Operator Theory. The origin of this research area lies in the notion of {\em hypercyclicity} (existence of a dense orbit), and from there many different lines of work have been derived such as {\em frequent hypercyclicity}, {\em mixing properties}, {\em chaos} and {\em disjoint-hypercyclicity} to name a few (see the textbooks \cite{BaMa2009_book,GrPe2011_book} and the 2020 survey \cite{Gilmore2020} for a comprehensive compilation of the theory developed in this area along the last 40 years).

Although Linear Dynamics has always been at the forefront of several mathematical domains, there are at least two reasons justifying why hypercyclicity has historically been the primary focus in this area: firstly, the existence of a dense orbit for a linear map can be considered as an exotic or even rare behaviour; and secondly, because of the very famous ``{\em Invariant Subspace Problem}'', whose strengthened ``{\em Invariant Subset Problem}''-version can be formulated in terms of the density of every non-zero orbit (see for instance \cite[Section~2.1]{Gilmore2020}). This great emphasis in the notion of hypercyclicity has led to a neglect of other intriguing dynamical properties, particularly those originating in the realm of {\em differentiable} and {\em compact dynamical systems}, that have just received attention within the linear context over the last decade and which can be considered as ``{\em new}'' in Linear Dynamics.

Between these properties we can find that of {\em linear recurrence}, introduced in the 2014 paper \cite{CoMaPa2014} and followed by \cite{BoGrLoPe2022,GriLoPe2025_AMP,Lopez2024_IMRN,LoMe2025_JMAA} among others; the study of notions coming from Ergodic Theory such as {\em linear measure preserving systems} (see \cite{GriLo2023,GriMa2014,Lopez2024_RinM}) or that of {\em entropy} for certain classes of operators (see the 2020 paper \cite{BriKe2020}); but also the concepts of {\em expansivity} and {\em hyperbolicity}, which have been strongly connected to those of {\em chain recurrence} and {\em shadowing} (with its multiple variations) for linear operators in the very recent works \cite{AlBerMess2021,AntMantVar2022,BerCiDaMessPu2018,BerMe2020,BerMe2021,BernardesPe2024_AM,CiGoPu2021,DAnUMa2021,Maiuriello2022}. In this paper we will focus on the particular idea of {\em chain recurrence}, which was originally introduced by Conley~\cite{Conley1972}, and then further developed by Conley~\cite{Conley1976,Conley1978} and Bowen~\cite{Bowen1975} together with the conception of {\em pseudotrajectory}.

It is worth mentioning that {\em chain recurrence} has intimate connections with the so-called Attractors and Stability Theory (see the textbooks \cite{AHi1994_book,Conley1978,Shub1987_book}) and that, from its origin, this notion has been deeply related to the study of differential equations: in the paper \cite{Conley1972} chain recurrence was related to the existence of progressive wave solutions for partial differential equations; in \cite{Conley1976} it was used to study some aspects of the qualitative theory of differential equations; and in \cite[Chapter~1]{Conley1978} it was a key tool to explore the stable properties of the solution set of ordinary differential equations. From the perspective of Linear Dynamics we have two possibilities:\\[-20pt]
\begin{enumerate}[--]
	\item we can relate this property to the theory of hypercyclicity (see for instance \cite[Theorem~A]{AntMantVar2022}, but also the results \cite[Theorems~7~and~12]{BernardesPe2024_AM}, where it is proved that {\em a continuous linear operator acting on a Banach space is topologically mixing, frequently hypercyclic and even densely distributionally chaotic as soon as the operator is \textbf{chain recurrent} and has the positive shadowing property});\\[-15pt]
	
	\item or we can just develop the theory regarding this specific ``{\em new}'' Linear Dynamics' property, by finding results and linear examples (usually on infinite-dimensional spaces) that can potentially behave in a very different way to the corresponding results from differentiable and compact dynamics.
\end{enumerate}
Here we will follow this second line of thought and, in particular, we completely solve in the negative the next problem posed by Nilson C.\ Bernardes Jr.\ and Alfred Peris in \cite{BernardesPe2024_AM}:

\begin{question}[\textbf{\cite[Problem~D]{BernardesPe2024_AM}}]\label{Q:main}
	Let $T:X\longrightarrow X$ be a continuous linear operator on a Banach space $X$ and let $CR(T)$ be its set of chain recurrent vectors. Is then $T\res_{CR(T)}$ chain recurrent?
\end{question}

The respective question has a positive answer for invertible compact dynamical systems. Indeed, the result \cite[Theorem~3.1.6]{AHi1994_book} shows that {\em given a homeomorphism $f:K\longrightarrow K$ for a compact metric space $K$, then the set of chain recurrent points for the restriction $f\res_{CR(f)}$ coincides with $CR(f)$, and hence $f\res_{CR(f)}$ is chain recurrent}. Our main result shows that this is no longer true for continuous linear operators, and completely solves Question~\ref{Q:main} in the negative, even on Hilbert spaces:

\begin{theorem}\label{The:main}
	There exist a Banach (even Hilbert) space $X$ and a continuous (and even invertible) linear operator $T:X\longrightarrow X$ such that the restriction of $T$ to its closed $T$-invariant subspace of chain recurrent vectors $T\res_{CR(T)}:CR(T)\longrightarrow CR(T)$ is not a chain recurrent operator. In particular, the operator $T$ can be constructed to fulfill that the subspace $CR(T)$ is infinite-dimensional while for its restriction $T\res_{CR(T)}$ we have the equality $CR(T\res_{CR(T)})=\{0_X\}$ for the zero-vector $0_X \in X$.
\end{theorem}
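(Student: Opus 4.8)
The plan is to realize the counterexample as a weighted backward shift $B_w$ on $\ell^2(V)$ for a suitably chosen unrooted directed tree $(V,E)$: every vertex $v$ has a unique parent $p(v)$, so $B_w e_v=w_v e_{p(v)}$, the forward orbit of each basis vector $e_v$ climbs the unique ray issuing from $v$, and an $\eps$-jump may relocate mass anywhere at a price governed by the local weights. The soft input one uses throughout is that for any continuous linear $T$ the set $CR(T)$ is a closed, $T$-invariant linear subspace — this is what makes the statement even meaningful — so all the work is quantitative. The design target is a tree that splits, roughly, into a distinguished up-set $W\subseteq V$ on which $B_w$ contracts or behaves like a forward shift, so that $B_w\res_{Y}$, where $Y:=\cl{\lspan}\{e_v:v\in W\}$, has no nonzero chain recurrent vector and, crucially, is \emph{not} a classical (unilateral, acting on $\NN$, nor bilateral, acting on $\ZZ$) weighted backward shift; together with auxiliary branches hanging off the vertices of $W$ that will serve as ``escape routes.''

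Identifying $CR(B_w)$ I would do in two inclusions. For $CR(B_w)\subseteq Y$: arrange the auxiliary branches so that no $v\notin W$ supports a recurrent $\eps$-chain — for instance by making such $v$ leaves (then $e_v$ never lies in the image of $B_w$, hence is unreachable up to $\eps$) or by making $B_w$ a strict contraction along them (then a Lyapunov/no-return estimate forces any $\eps$-chain through those coordinates to stay $O(\eps)$ and never to return to a vector of fixed size), with a further argument ruling out ``diagonal'' chain recurrent vectors having a component off $Y$. For $Y\subseteq CR(B_w)$: exhibit, for each $v\in W$ and each $\eps>0$, an explicit $\eps$-chain from $e_v$ back to $e_v$ that first pushes $e_v$ up its ray until its norm drops below $\eps$, then uses $\eps$-jumps to feed the now-negligible mass into an auxiliary branch engineered so that iterating $B_w$ re-inflates it — via genuinely expanding weights, or via the ``concentration'' effect of a branching subtree — landing on an approximate preimage of $e_v$, and finally applies $B_w$ once more. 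The essential feature of this route is that it unavoidably exits $W$. Linearity and closedness of $CR(B_w)$ then upgrade the basis vectors $e_v$ to all of $Y$, giving $CR(B_w)=Y$.

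The heart of the matter is then $CR\big(B_w\res_{CR(B_w)}\big)=\{0_X\}$, and the step I expect to be the real obstacle is making the two halves above coexist. The danger is that an auxiliary branch rich enough to re-inflate mass — and hence to make $Y$ chain recurrent inside the ambient space — is itself chain recurrent: its own basis vectors would then lie in $CR(B_w)$, enlarging it past $Y$ and simultaneously reinstating the re-inflation route inside the restricted system. The construction must break this symmetry: the auxiliary branches have to be usable ``from the outside'' yet not self-recurrent, which I expect to force the forward orbits of their vertices to escape to infinity in norm (so they cannot be closed up by bounded $\eps$-jumps) while their preimage trees still shrink downwards (so mass seeded deep in them can be re-inflated toward $W$). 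Checking that this delicate balance is realizable on an explicit, finitely described tree, and that the restriction $B_w\res_{Y}$ then admits no nonzero chain recurrent vector against \emph{all} $\eps$-chains, not merely orbit-like ones, is where the main effort goes.

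For the invertible, Hilbert-space statement: the underlying space is already $\ell^2(V)$, so only bijectivity is missing, and $B_w$ is typically neither injective nor surjective. I would replace it by an invertible operator $T$ with the same chain recurrent set and the same induced dynamics on it — for example by gluing $B_w$ to an auxiliary invertible operator along its kernel and cokernel, or by passing to a bilateral-type extension of the tree that renders the shift bijective — accepting, as the paper signals, that $T$ then falls outside the class of weighted shifts. One finally verifies that this surgery leaves $CR(T)$ and $T\res_{CR(T)}$ unchanged, so that $CR\big(T\res_{CR(T)}\big)=\{0_X\}$ persists; this verification is routine but must be carried out carefully, since invertibility is exactly the hypothesis under which the compact-dynamical analogue \cite[Theorem~3.1.6]{AHi1994_book} holds, so the failure of chain recurrence has to survive the passage to the invertible setting.
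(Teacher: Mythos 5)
Your overall architecture --- a weighted backward shift on a directed tree whose chain recurrent set is the closed span of a distinguished ``spine'', with auxiliary branches serving as escape routes, followed by a non-shift modification to gain invertibility --- is indeed the strategy of Sections~\ref{Sec_3:non-invertible} and \ref{Sec_4:invertible}. But the concrete dynamical design you commit to breaks down, and it breaks exactly at the point you flag as ``where the main effort goes''. You want the spine to contract (your $\eps$-chain from $e_v$ rides the orbit until its norm drops below $\eps$ and then re-inflates through a branch), and you want branch vertices kept out of $CR(B_w)$ by forcing their forward orbits to escape to infinity in norm. These two requirements are incompatible for a tree shift: a branch hanging off the spine consists of descendants of a spine vertex, so its forward orbit merges into the spine after finitely many steps and then decays if the spine contracts --- it cannot escape. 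Worse, if a branch can re-inflate arbitrarily small seeds up to a spine vertex (which your ``from near-$0$ back to $e_v$'' half requires, since a contracting spine cannot do it), then the same seeds, stopped a few steps earlier, land exactly on each branch coordinate; combined with the free route from any branch vector to $0_X$ along the contracting spine, every vertex of such a branch becomes chain recurrent, so $CR(B_w)$ strictly exceeds your $Y$ and the restriction is chain recurrent there --- the collapse you yourself anticipate, which your proposed fix does not avert. Note also that ``forward orbit escapes to infinity'' is not per se an obstruction to chain recurrence: in the paper's example every spine vector has an orbit escaping to infinity, yet it is chain recurrent. The paper resolves the tension with the opposite asymmetry: the spine \emph{expands} ($|\mu_1|>1$), so reachability from $0_X$ is achieved along the spine itself (Step~1 of Fact~\ref{Fact:e0.CR}); the branches, with even larger weights $\mu_2$ but of \emph{finite} length $k$ at vertex $-k$, are used only for the return to $0_X$, by planting a small seed deep in the branch whose iterate cancels the grown spine mass at the junction (Step~2); and branch vectors fail to be chain recurrent not because their orbits escape but because the total amplification feeding a branch coordinate is bounded by $(k-j+1)|\mu_2|^{k-j}$, so $\delta$-corrections can never produce a fixed nonzero branch coordinate (Fact~\ref{Fact:non.CR}). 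This quantitative ``bounded incoming amplification'' estimate, together with its two-sided analogue \eqref{eq:non.CR} and Fact~\ref{Fact:non.CR.T} in the invertible case, is the actual content of the proof and is absent from your proposal.

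Two further points. Your insistence that $B_w\res_Y$ must \emph{not} be a classical shift misreads the role of Theorem~\ref{The:classical}: that theorem only rules out the \emph{ambient} operator being a classical unilateral or bilateral shift, while the restriction to $CR$ may perfectly well be one --- in the paper it is precisely the bilateral backward shift multiplied by $\mu_1$, a proper dilation with chain recurrent set $\{0_X\}$. And for invertibility, ``gluing along kernel and cokernel while leaving $CR$ unchanged'' is not routine: the paper instead extends each finite branch to a copy of $\ZZ$, replaces the shift at the junction by $e_{(-k,1)}\mapsto\mu_2\left(e_{(-k,0)}+e_{-k}\right)$ (thus leaving the class of tree shifts, as it must), and re-proves both inclusions under the extra hypotheses \eqref{eq:continuity} and \eqref{eq:non.CR}; without such summability conditions the now-infinite branches can easily become chain recurrent and destroy the example.
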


We would like to remark that, apart from solving Question~\ref{Q:main}, Theorem~\ref{The:main} also fights against a possible generalization of the very specific Hilbert-space result \cite[Corollary~2.10]{AntMantVar2022}: {\em the set of chain recurrent vectors of the restriction $T\res_{CR(T)}$ coincides with $CR(T)$ for every self-adjoint operator $T$ acting on a Hilbert space $H$}. In addition, the following was asked in \cite[Question~4.3]{AntMantVar2022}: {\em under which conditions on a $T$-invariant subspace $Y \subset X$, for an operator $T$ acting on a Banach space $X$, can we guarantee that $T\res_Y$ is chain recurrent?} In this case, Theorem~\ref{The:main} shows that the apparently natural condition ``$Y=CR(T)$'' is not enough, even if we assume that ``{\em $T$ is invertible}''. To prove Theorem~\ref{The:main} we use the class of {\em weighted backward shift operators on directed trees}.~In particular, the non-invertible case can be directly solved via relatively simple shifts on certain unrooted trees (see Theorem~\ref{The:non-invertible}) and, although the invertible case cannot be achieved via shifts, a slight modification of the previous counterexample will finish the work. In addition, we complete our study by showing that Question~\ref{Q:main} cannot be solved via classical shifts, therefore evidencing the great difference between {\em shifts on trees} and {\em classical shifts} in chain recurrence (see Theorem~\ref{The:classical} and Corollary~\ref{Cor:classical}).

The paper is organized as follows. In Section~\ref{Sec_2:notation} we introduce the notation and general background regarding {\em chain recurrence} and {\em weighted backward shift operators on directed trees}. In Section~\ref{Sec_3:non-invertible} we exhibit the non-invertible counterexample by using a weighted shift on a specific directed tree. Then, in Section~\ref{Sec_4:invertible}, we solve the invertible case by considering a slight modification of the example constructed in Section~\ref{Sec_3:non-invertible}. We finally show in Section~\ref{Sec_5:classical} that a classical (unilateral or bilateral) weighted backward shift acting on a Fr\'echet sequence space (over $\NN$ or $\ZZ$ respectively) is a chain recurrent operator as soon as it admits a non-zero chain recurrent vector.

\section{Notation and general background}\label{Sec_2:notation}

In some parts of \cite{BernardesPe2024_AM} the authors study {\em chain recurrence} in the context of linear maps on arbitrary, and not necessarily Hausdroff, topological vector spaces (they use the notion of {\em uniform space} and sometimes they even remove the continuity of the map), but since we will focus on operators acting on {\em Fr\'echet sequence spaces} we will just consider the following natural Linear Dynamics setting: from now on $T:X\longrightarrow X$ will be a {\em continuous linear operator} on a {\em real} or {\em complex Fr\'echet space} $X$, that is, a locally convex and completely metrizable topological vector space. We will briefly write $T \in \Lc(X)$ and the symbol $\KK$ will stand for either the real or complex field $\RR$ or $\CC$. Moreover, whenever we consider a Fr\'echet space $X$ we will assume that we have already chosen an increasing sequence of seminorms $(\|\cdot\|_k)_{k\in\NN}$ inducing its topology, which can also be endowed by the translation invariant metric $d(f,g):=\|f-g\|$ for each pair $(f,g) \in X\times X$, where the functional
\begin{equation}\label{eq:F-norm}
	\|f\| := \sum_{k\in\NN} 2^{-k} \cdot \min\left\{ 1 , \left\|f\right\|_k \right\} \quad \text{ for each } f \in X,
\end{equation}
is an {\em F-norm} for $X$ (see \cite[Section~2.1]{GrPe2011_book}). This allows us to treat the Fr\'echet case in a similar way to that in which $X$ is a Banach space, where the functional $\|\cdot\|$ will simply denote the norm of $X$.

\subsection{Chain recurrence in Linear Dynamics}

We can now define {\em chain recurrence} directly adapted to our Fr\'echet space setting:

\begin{definition}\label{Def:chain-rec}
	Given an operator $T \in \Lc(X)$ and a positive $\delta>0$, a finite sequence $(f_l)_{l=0}^m$ in $X$ is called a {\em $\delta$-chain for $T$ from $f_0$ to $f_m$}, also called {\em finite $\delta$-pseudotrajectory for $T$ from $f_0$ to $f_m$}, if
	\[
	\left\| f_l - T(f_{l-1}) \right\| < \delta \quad \text{ for every } 1\leq l\leq m,
	\]
	where the positive integer $m \in \NN$ is called the {\em length} of $(f_l)_{l=0}^m$. Moreover, a vector $f \in X$ is said to be {\em chain recurrent for $T$} if for every $\delta>0$ there exists a $\delta$-chain for $T$ from $f$ to itself. We will denote by $CR(T)$ the {\em set of chain recurrent vectors for $T$}, and the operator $T$ is said to be {\em chain recurrent} if the equality $CR(T)=X$ holds, that is, if every vector of the space is chain recurrent.
\end{definition}

Chain recurrence in Linear Dynamics was originally considered for operators on normed spaces in the 2022 paper \cite{AntMantVar2022}, although the chain recurrent behaviour for the particular case of classical unilateral and bilateral weighted backward shifts, acting on the also classical Banach $\ell^p$ and $c_0$ spaces, was already characterized in the 2021 paper \cite{AlBerMess2021}. In the linear context, the systematic study of chain recurrence and other similar notions depending on the concept of pseudotrajectory (such as {\em shadowing}) has been recently followed in \cite{BernardesPe2024_AM}. Along this paper we will use more than once that the set $CR(T)$ is always a closed $T$-invariant subspace for every continuous linear operator $T \in \Lc(X)$ as observed in the already mentioned works \cite[Corollary~2.3]{AntMantVar2022} and \cite[Proposition~26]{BernardesPe2024_AM}, so that the restriction $T\res_{CR(T)}$ considered in Question~\ref{Q:main} is again a continuous linear operator acting on the respective Banach space of chain recurrent vectors $CR(T)$. Moreover, the reader should keep in mind the following basic and immediate (but useful) remarks, that have been either proved, observed or even directly used in \cite{AntMantVar2022} and \cite{BernardesPe2024_AM}:

\begin{lemma}\label{Lem:useful}
	The following statements hold for every linear operator $T \in \Lc(X)$:
	\begin{enumerate}[{\em(a)}]
		\item If a finite sequence $(f_l)_{l=0}^m$ in $X$ is a $\delta$-chain for $T$, then there exists a second finite sequence $(g_l)_{l=1}^m$ in $X$ fulfilling that
		\[
		f_m = T^m(f_0) + \sum_{l=1}^m T^{m-l}(g_l) \quad \text{ and } \quad \|g_l\| < \delta \text{ for every } 1\leq l\leq m.
		\]
		
		\item A vector $f \in X$ is chain recurrent for $T$ if and only if for every $\delta>0$ there exists a $\delta$-chain for $T$ from $f$ to the zero-vector $0_X \in X$ and another $\delta$-chain for $T$ from the zero-vector $0_X \in X$ to $f$.
	\end{enumerate}
\end{lemma}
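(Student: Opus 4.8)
The plan is to treat part~(a) by unwinding the recursion that defines a $\delta$-chain, and to treat part~(b) by proving its two implications separately: the ``if'' direction is a one-line concatenation, while the ``only if'' direction I would get by linearly ``tilting'' a long chain from $f$ to $f$ down to $0_X$ (and up from $0_X$). Nothing beyond Definition~\ref{Def:chain-rec} and linearity of $T$ should be needed. For part~(a), given a $\delta$-chain $(f_l)_{l=0}^m$ I would set $g_l := f_l - T(f_{l-1})$ for $1\le l\le m$, so that $\|g_l\|<\delta$ by definition and $f_l = T(f_{l-1})+g_l$; an immediate induction on $l$ (apply $T$ to the identity for $l-1$ and add $g_l$) yields $f_l = T^l(f_0)+\sum_{i=1}^{l}T^{l-i}(g_i)$ for every $0\le l\le m$, and the case $l=m$ is the claim.

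For the ``if'' direction of part~(b), fix $\delta>0$ and take a $\delta$-chain $(a_l)_{l=0}^p$ from $f$ to $0_X$ and a $\delta$-chain $(b_l)_{l=0}^q$ from $0_X$ to $f$. I would concatenate them into $(a_0,\dots,a_p,b_1,\dots,b_q)$: all consecutive estimates inside the two blocks are inherited, and at the junction, since $a_p=b_0=0_X$ and $T$ is linear, $\|b_1-T(a_p)\| = \|b_1-T(0_X)\| = \|b_1-T(b_0)\| < \delta$. Hence this is a $\delta$-chain from $f$ to $f$, and since $\delta>0$ was arbitrary, $f\in CR(T)$.

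The ``only if'' direction is where the real work lies, and it is the step I expect to be the main obstacle. Suppose $f\in CR(T)$ and fix $\delta>0$. Choose a $(\delta/2)$-chain from $f$ to $f$, of some length $m$, and for each $N\ge 1$ let $(F_l)_{l=0}^{Nm}$ be its $N$-fold self-concatenation: this is again a $(\delta/2)$-chain from $f$ to $f$ (the junction steps reuse $f_m=f_0$), and crucially its set of values lies in the fixed finite set $\{f_0,\dots,f_m\}$ of values of the original chain. Put $h_l := \bigl(1-\tfrac{l}{Nm}\bigr)F_l$, so $h_0=f$ and $h_{Nm}=0_X$. Then
\[
h_l - T(h_{l-1}) = \Bigl(1-\tfrac{l}{Nm}\Bigr)\bigl(F_l - T(F_{l-1})\bigr) - \tfrac{1}{Nm}\,T(F_{l-1}),
\]
so $\|h_l-T(h_{l-1})\| \le \bigl\|F_l - T(F_{l-1})\bigr\| + \bigl\|\tfrac{1}{Nm}T(F_{l-1})\bigr\|$, where the first summand is $<\delta/2$ because scaling by a factor in $[0,1]$ does not increase the $F$-norm and $(F_l)$ is a $(\delta/2)$-chain, and the second summand is $<\delta/2$ once $N$ is large enough, since $\tfrac{1}{Nm}T(F_{l-1})\to 0_X$ uniformly over the finitely many possible values of $F_{l-1}$. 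For such $N$, $(h_l)_{l=0}^{Nm}$ is a $\delta$-chain from $f$ to $0_X$; replacing $h_l$ by $\tfrac{l}{Nm}F_l$ (so $h_0=0_X$, $h_{Nm}=F_{Nm}=f$) gives by the same estimate a $\delta$-chain from $0_X$ to $f$. The one genuinely non-obvious idea is to lengthen the loop at $f$ by self-concatenation before tilting, so that the per-step cost $\tfrac{1}{Nm}\|T(F_{l-1})\|$ of the linear tilt becomes negligible while the loop's own error stays $<\delta/2$; with that observation in place, everything else is bookkeeping.
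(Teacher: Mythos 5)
Your part~(a) and the sufficiency (``if'') direction of part~(b) are exactly the paper's argument: the same choice $g_l:=f_l-T(f_{l-1})$ with the evident induction, and the same concatenation of the two chains at the fixed point $0_X$ using $T(0_X)=0_X$. The difference lies in the necessity (``only if'') direction, which the paper does not prove from scratch: it invokes the known fact that a linear operator has a unique chain recurrence class, citing the proof of Theorem~2.1 in \cite{AntMantVar2022} and Lemma~8 together with Proposition~25 of \cite{BerPe2023_arXiv}. You instead give a complete self-contained argument: self-concatenate a $(\delta/2)$-chain from $f$ to $f$ into a long $(\delta/2)$-chain $(F_l)_{l=0}^{Nm}$ whose values stay in a fixed finite set, then tilt it linearly via $h_l=\bigl(1-\tfrac{l}{Nm}\bigr)F_l$ (respectively $h_l=\tfrac{l}{Nm}F_l$), the per-step tilting cost $\tfrac{1}{Nm}T(F_{l-1})$ being absorbed by taking $N$ large. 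This is correct, and I note you are careful on the two points where a sloppier version could fail in the Fr\'echet setting: you only use that multiplication by scalars in $[0,1]$ does not increase the F-norm \eqref{eq:F-norm} (true, since $\min\{1,|c|\,\|\cdot\|_k\}\leq\min\{1,\|\cdot\|_k\}$ for $|c|\leq 1$), and you deduce smallness of $\tfrac{1}{Nm}T(F_{l-1})$ from continuity of scalar multiplication over the finitely many possible values rather than from homogeneity, which the F-norm lacks. What your route buys is independence from the cited external results (and it essentially reproves, for the class of $0_X$, the ``unique chain class'' statement those references contain); what the paper's route buys is brevity and an explicit pointer to the structural fact behind the lemma.
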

\begin{proof}
	Statement (a) follows by letting $g_l := f_l - T(f_{l-1})$ for each $1\leq l\leq m$. For the sufficiency in statement (b) note that we can concatenate $\delta$-chains finishing and starting at the zero-vector $0_X \in X$ because $T(0_X)=0_X$, while the necessity follows from the fact that a linear operator always has a unique chain recurrent class (see \cite[Proof of Theorem~2.1]{AntMantVar2022} and \cite[Lemma~8 and Proposition~25]{BernardesPe2024_AM}).
\end{proof}

\subsection{Fr\'echet sequence spaces and weighted shifts on directed trees}\label{SubSec_2.2:shifts}

Weighted shifts play a fundamental role in Operator Theory and Linear Dynamics: their very simple definition allows to explicitly compute the full orbit of every vector in the space while they remain a flexible class of operators where one can choose different weights to play with. This is why any new dynamical notion is usually first tested on shifts (as happened for chain recurrence in \cite{AlBerMess2021}), and they have provided many important counterexamples (see \cite[Section~4.1]{GrPe2011_book} and the references there). The generalized notion of {\em shifts acting on trees} has recently been investigated and here we will show that, for chain recurrence, they behave in a very different way than classical shifts. Following \cite{GrPa2023,GrPa2023_arXiv,JabJungStoc2012} a {\em directed tree} $(V,E)$, or just a {\em tree} $V$ for short, will be a connected directed graph consisting of a countable set of vertices $V$ and a set of directed edges $E \subset V\times V \setminus \{ (v,v) \ ; \ v \in V \}$ such that:\\[-17.5pt]
\begin{enumerate}[--]
	\item $V$ has no cycles;\\[-17.5pt]

	\item each vertex $v \in V$ has at most one {\em parent}, that is, a unique vertex $w \in V$ such that $(w,v) \in E$;\\[-17.5pt]

	\item there is at most one vertex with no parent, and in that case it is called the {\em root of $V$}.
\end{enumerate}
Given any vertex $v \in V$ its {\em parent} will be denoted by ``$\text{par}(v)$'', and the symbol ``$\text{Chi}(v)$'' will stand for the {\em set of all children} of $v \in V$, that is, the set formed by the vertices $u \in V$ such that $(v,u) \in E$. As an example one can think about the sets $\NN:=\{1,2,...\}$ or $\ZZ:=\{...,-2,-1,0,1,2,...\}$ with their usual tree structure, that is, for each integer $n \in \NN$ or $\ZZ$ we have that $\text{Chi}(n)=\{n+1\}$. In this case the set $\NN$ is a {\em tree with a root}, namely the positive integer $1 \in \NN$, while $\ZZ$ is an {\em unrooted tree}. We can now introduce the specific Fr\'echet spaces and linear operators that we will work with:

\begin{definition}\label{Def:shifts}
	For an arbitrary finite or countable set $V$ we will denote by $\KK^V$ the {\em Fr\'echet space of all (real or complex) sequences $f = (f(v))_{v\in V}$ over $V$} endowed with the usual product topology, which can be induced by the family of seminorms $(\|\cdot\|_k)_{k\in\NN}$, where
	\[
	\left\|f\right\|_k := \max_{v \in F_k} \left|f(v)\right| \quad \text{ for each } f = (f(v))_{v \in V} \in \KK^V,
	\]
	and where $(F_k)_{k\in\NN}$ is an increasing sequence of finite subsets of $V$ fulfilling that $V = \bigcup_{k\in\NN} F_k$. We will denote the {\em canonical unit sequences} of $\KK^V$ by $e_v := \chi_v$ for each $v \in V$, and a subspace $X \subset \KK^V$ is called a {\em Fr\'echet} (or {\em Banach}) {\em sequence space over $V$} if it is endowed with a Fr\'echet (resp.\ Banach) space topology for which the canonical embedding $X \hookrightarrow \KK^V$ is continuous.
	
	If now $V$ is a tree and we chose a {\em sequence of weights} denoted as $\sbf{\lambda} = (\lambda_v)_{v \in V} \in \KK^{V}$, the respective {\em weighted backward shift} $B_{\sbf{\lambda}}$ is formally defined on $\KK^V$ as
	\[
	[B_{\sbf{\lambda}}(f)](v) \ \ := \sum_{u \in \text{Chi}(v)} \lambda_u \cdot f(u) \quad \text{ for each } f = (f(v))_{v \in V} \in \KK^V \text{ and each } v \in V.
	\]
	By the Closed Graph Theorem, the restriction of $B_{\sbf{\lambda}}$ to a Fr\'echet sequence space $X$ over $V$ is continuous as soon as $B_{\sbf{\lambda}}:X\longrightarrow X$ is well-defined, that is, as soon as the shift $B_{\sbf{\lambda}}$ maps $X$ into itself. 
\end{definition}

Since Question~\ref{Q:main} was asked for operators acting on Banach spaces, in this paper we will focus on weighted backward shifts acting on the Banach sequence spaces
\[
\ell^p(V) := \left\{ f = (f(v))_{v\in V} \in \KK^V \ ; \ \sum_{v\in V} |f(v)|^p < \infty \right\} \quad \text{ for } 1\leq p<\infty,
\]
endowed with the norm $\|f\|_p := \left( \sum_{v\in V} |f(v)|^p \right)^{1/p}$ for each $f = (f(v))_{v\in V} \in \ell^p(V)$; but we will also consider the associated $c_0$-space
\[
c_0(V) := \left\{ f = (f(v))_{v\in V} \in \KK^V \ ; \ \forall \eps>0, \exists F_{\eps} \subset V \text{ finite}, \forall v \in V\setminus F_{\eps}, |f(v)|<\eps \right\},
\]
endowed with the norm $\|f\|_{\infty} := \sup_{v\in V} |f(v)|$ for each $f = (f(v))_{v\in V} \in c_0(V)$. Note that, when we have $V=\NN$ or $\ZZ$ with their usual tree structure (that is, $\text{Chi}(n)=\{n+1\}$ for each $n \in V$), then the respective shifts coincide with the classical definition of these operators, that is, with
\[
(f(n))_{n\in V} \longmapsto (\lambda_{n+1} \cdot f(n+1))_{n\in V},
\]
for any fixed sequence of weights $\sbf{\lambda}=(\lambda_n)_{n\in V}$ in $\KK^{\NN}$ or $\KK^{\ZZ}$ (see Section~\ref{Sec_5:classical} for more on classical unilateral and bilateral backward shifts). We are now ready to construct our non-invertible counterexample.

\section{A bilateral backward shift with some extra branches}\label{Sec_3:non-invertible}

In this section we prove the non-invertible case of Theorem~\ref{The:main} by considering a weighted backward shift acting on a directed tree, which solves Question~\ref{Q:main} negatively in a relatively simple way. In particular, we prove the following weakened version of Theorem~\ref{The:main}:

\begin{theorem}\label{The:non-invertible}
	There exists a weighted backward shift $B_{\sbf{\lambda}}:X\longrightarrow X$ acting on a directed tree Banach (even Hilbert) sequence space $X$ such that the restriction of $B_{\sbf{\lambda}}$ to its closed $B_{\sbf{\lambda}}$-invariant subspace of chain recurrent vectors $B_{\sbf{\lambda}}\res_{CR(B_{\sbf{\lambda}})}:CR(B_{\sbf{\lambda}})\longrightarrow CR(B_{\sbf{\lambda}})$ is not a chain recurrent operator. In particular, $B_{\sbf{\lambda}}$ can be constructed to fulfill that the subspace $CR(B_{\sbf{\lambda}})$ is infinite-dimensional while for its restriction $B_{\sbf{\lambda}}\res_{CR(B_{\sbf{\lambda}})}$ we have the equality $CR(B_{\sbf{\lambda}}\res_{CR(B_{\sbf{\lambda}})})=\{0_X\}$ for the zero-vector $0_X \in X$.
\end{theorem}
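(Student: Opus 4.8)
The plan is to realize $B_{\sbf{\lambda}}$ as a classical bilateral backward shift to which we glue finitely--long ``reservoir'' branches. Let $V$ be the tree obtained from $\ZZ$ (with its usual structure $\text{Chi}(n)=\{n+1\}$) by attaching, at each $n\geq 1$, a finite path $n\to r_{n,1}\to\dots\to r_{n,L_n}$ ending in the leaf $r_{n,L_n}$, where the lengths $L_n$ grow fast (for instance $L_n=n^2$); thus $V=\ZZ\cup\bigcup_{n\geq 1}\{r_{n,1},\dots,r_{n,L_n}\}$ and $\text{Chi}(n)=\{n+1,r_{n,1}\}$ for $n\geq 1$. As weights take every line edge equal to $\tfrac12$ and every branch edge equal to $2$. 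Since each vertex has at most two children and the weights are uniformly bounded, $B_{\sbf{\lambda}}$ defines a bounded operator on $X:=\ell^p(V)$ for every $1\leq p<\infty$ (hence on the Hilbert space $\ell^2(V)$) and on $c_0(V)$. I expect the crux of the whole argument to be recognizing that the branches must be of \emph{unbounded} length: a branch of fixed length only amplifies by a bounded factor, while Lemma~\ref{Lem:useful}(a) forces any $\delta$-chain starting at $0$ to produce, at each fixed vertex, a vector of size at most (bounded constant)$\cdot\delta$; so making every line vector chain recurrent genuinely requires arbitrarily strong amplifiers placed arbitrarily far to the right. With the construction fixed, the remaining steps are routine.

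First I would show $CR(B_{\sbf{\lambda}})\subseteq\cl{\lspan}\{e_n:n\in\ZZ\}$. If $f\in CR(B_{\sbf{\lambda}})$ then, by Lemma~\ref{Lem:useful}(b) together with (a), for each $\delta>0$ one can write $f=\sum_{l=1}^{m}B_{\sbf{\lambda}}^{\,m-l}(g_l)$ with $\|g_l\|<\delta$. Fix a branch vertex $r_{n,j}$; the vertices lying $k$ steps below it form the single sub-path $r_{n,j+1},\dots,r_{n,L_n}$, so $[B_{\sbf{\lambda}}^{\,k}(g)](r_{n,j})=2^{k}g(r_{n,j+k})$ if $k\leq L_n-j$ and $0$ otherwise, whence $|f(r_{n,j})|\leq (L_n-j+1)\,2^{\,L_n-j}\,\delta\to 0$. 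Thus $f$ vanishes on every branch vertex. For the reverse inclusion, since $CR(B_{\sbf{\lambda}})$ is a closed subspace (\cite[Corollary~2.3]{AntMantVar2022}, \cite[Proposition~26]{BerPe2023_arXiv}) it suffices that each $e_n$ be chain recurrent, and by Lemma~\ref{Lem:useful}(b) this amounts to exhibiting $\delta$-chains $e_n\to 0$ and $0\to e_n$. The first is immediate from $B_{\sbf{\lambda}}^{\,m}(e_n)=2^{-m}e_{n-m}\to 0$. For the second, pick $N>n$ with $c:=2^{\,N-n}2^{-L_N}<\delta$ (possible since $L_N\to\infty$ fast) and run the chain $0,\ c\,e_{r_{N,L_N}},\ B_{\sbf{\lambda}}(c\,e_{r_{N,L_N}}),\dots$: each of the first $L_N$ applications of $B_{\sbf{\lambda}}$ multiplies by $2$ as one climbs the branch up to the vertex $N$, and the next $N-n$ applications multiply by $\tfrac12$ as one moves left along $\ZZ$, so after $L_N+(N-n)$ steps one lands exactly on $c\cdot 2^{\,L_N}\cdot 2^{-(N-n)}e_n=e_n$, the only nonzero defect being $\|c\,e_{r_{N,L_N}}\|=c<\delta$. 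Hence $CR(B_{\sbf{\lambda}})=\cl{\lspan}\{e_n:n\in\ZZ\}$, which is (isometric to) the classical sequence space over $\ZZ$.

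Finally, for $f$ supported on $\ZZ$ one has $[B_{\sbf{\lambda}}f](n)=\tfrac12 f(n+1)$ and $B_{\sbf{\lambda}}f$ again supported on $\ZZ$, so $B_{\sbf{\lambda}}\res_{CR(B_{\sbf{\lambda}})}$ is the classical bilateral backward shift $B_0:=\tfrac12 S$, where $S$ is the unweighted shift. As $S$ is an isometry, $\|B_0^{\,k}g\|=2^{-k}\|g\|$; so if $g\in CR(B_0)$, writing a $\delta$-chain $0\to g$ as $g=\sum_{l=1}^{m}B_0^{\,m-l}(h_l)$ with $\|h_l\|<\delta$ gives $\|g\|\leq\sum_{l=1}^{m}2^{-(m-l)}\|h_l\|<2\delta$, so $g=0$. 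Therefore $CR(B_{\sbf{\lambda}}\res_{CR(B_{\sbf{\lambda}})})=CR(B_0)=\{0_X\}\neq CR(B_{\sbf{\lambda}})$, so the restriction is not chain recurrent; and $B_{\sbf{\lambda}}$ is non-invertible since $[B_{\sbf{\lambda}}g](r_{1,1})=0$ for every $g$, so the leaf vector $e_{r_{1,1}}$ is not in its range. This yields Theorem~\ref{The:non-invertible}.
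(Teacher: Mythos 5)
Your proposal is correct: the well-definedness of the shift, the two chains built from Lemma~\ref{Lem:useful}(b), the decomposition from Lemma~\ref{Lem:useful}(a) showing chain recurrent vectors vanish on every branch vertex (the finite branch only allows the bounded amplification $(L_n-j+1)2^{L_n-j}\delta$ into the coordinate $r_{n,j}$), and the contraction estimate $\|g\|<2\delta$ for the restricted operator $\tfrac12 S$ all check out, so $CR(B_{\sbf{\lambda}})=\cl{\lspan\{e_n \ ; \ n\in\ZZ\}}$ while $CR(B_{\sbf{\lambda}}\res_{CR(B_{\sbf{\lambda}})})=\{0_X\}$. The architecture is the same as the paper's (bilateral line plus finite branches of unbounded length, $CR$ computed via Lemma~\ref{Lem:useful}, restriction a scalar multiple of the bilateral shift with trivial chain recurrence), but your parameters are the mirror image of the paper's: the paper takes line weight $\mu_1$ with $|\mu_1|>1$ and attaches at each $-k$ a branch of length $k$ with weight $\mu_2$, $|\mu_2|>|\mu_1|$, so the chains from $0_X$ to $e_n$ ride the expanding line while the branches are used to cancel the growing forward orbit (chains from $e_n$ to $0_X$), and triviality of $CR$ of the restriction is quoted from the proper-dilation result \cite[Corollary~2.7]{AntMantVar2022}; you instead contract along the line (so chains to $0_X$ are the genuine orbit) and use the branches as amplifiers to manufacture $e_n$ from a small seed at a leaf, which is why your branch lengths must grow superlinearly in the attachment point ($2^{L_N}\gg 2^{N-n}$), whereas the paper's length-$k$ branch at $-k$ suffices because the exponential gap $|\mu_2|>|\mu_1|$ does the work. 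Your version buys a self-contained proof that the restriction has trivial $CR$ (a one-line contraction estimate instead of an external citation) and makes explicit the heuristic that unboundedly long, increasingly strong ``reservoir'' branches are what separates $CR(B_{\sbf{\lambda}})$ from $CR(B_{\sbf{\lambda}}\res_{CR(B_{\sbf{\lambda}})})$; the paper's version keeps all branch data tied to a single pair of scalars $\mu_1,\mu_2$, which is the form it then perturbs to handle the invertible case in Section~\ref{Sec_4:invertible}.
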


\textbf{\textit{The rest of this section is devoted to prove Theorem~\ref{The:non-invertible}}}:

Let $V := \ZZ \cup \{ (-k,j) \ ; \ k \in \NN \text{ and } 1\leq j\leq k \}$. We consider $V$ as a tree letting
\[
\begin{cases}
	\text{par}(n) := n-1 & \text{ for each } n \in \ZZ,\\[7.5pt]
	\text{par}((-k,j)) := (-k,j-1) & \text{ for each } k \in \NN \text{ and } 1< j\leq k,\\[7.5pt]
	\text{par}((-k,1)) := -k & \text{ for each } k \in \NN.
\end{cases}
\]
See Figure~\ref{Fig:tree} for a graphic representation of the respective directed tree $V$.\newpage
\begin{figure}[H]
	\begin{center}
		\includegraphics[width=16cm]{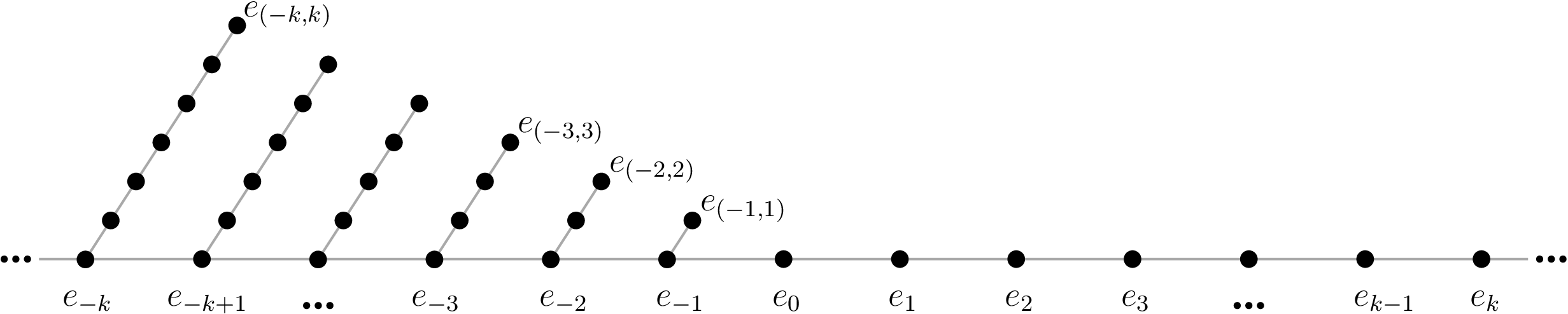}
		\caption{Graphic representation of the directed tree $V$ from Section~\ref{Sec_3:non-invertible}.}\label{Fig:tree}
	\end{center}
\end{figure}

We now fix two (real or complex) values $\mu_1,\mu_2 \in \KK$ fulfilling that $1<|\mu_1|<|\mu_2|$, and we consider the sequence of weights $\sbf{\lambda}=(\lambda_v)_{v \in V} \in \KK^V$ as
\[
\begin{cases}
	\lambda_n := \mu_1 & \text{ for each } n \in \ZZ,\\[7.5pt]
	\lambda_{(-k,j)} := \mu_2 & \text{ for each } k \in \NN \text{ and } 1\leq j\leq k.
\end{cases}
\]
Hence, the image under the associated weighted backward shift $B_{\sbf{\lambda}}$ for each of the canonical unit sequences $(e_v)_{v\in V}$ is the following
\[
\begin{cases}
	e_n \longmapsto \mu_1 \cdot e_{n-1} & \text{ for each } n \in \ZZ,\\[7.5pt]
	e_{(-k,j)} \longmapsto \mu_2 \cdot e_{(-k,j-1)} & \text{ for each } k \in \NN \text{ and } 1< j\leq k,\\[7.5pt]
	e_{(-k,1)} \longmapsto \mu_2 \cdot e_{-k} & \text{ for each } k \in \NN.
\end{cases}
\]
See Figure~\ref{Fig:shift} for a graphic representation of the respective backward shift $B_{\sbf{\lambda}}$.
\begin{figure}[H]
	\begin{center}
		\includegraphics[width=16cm]{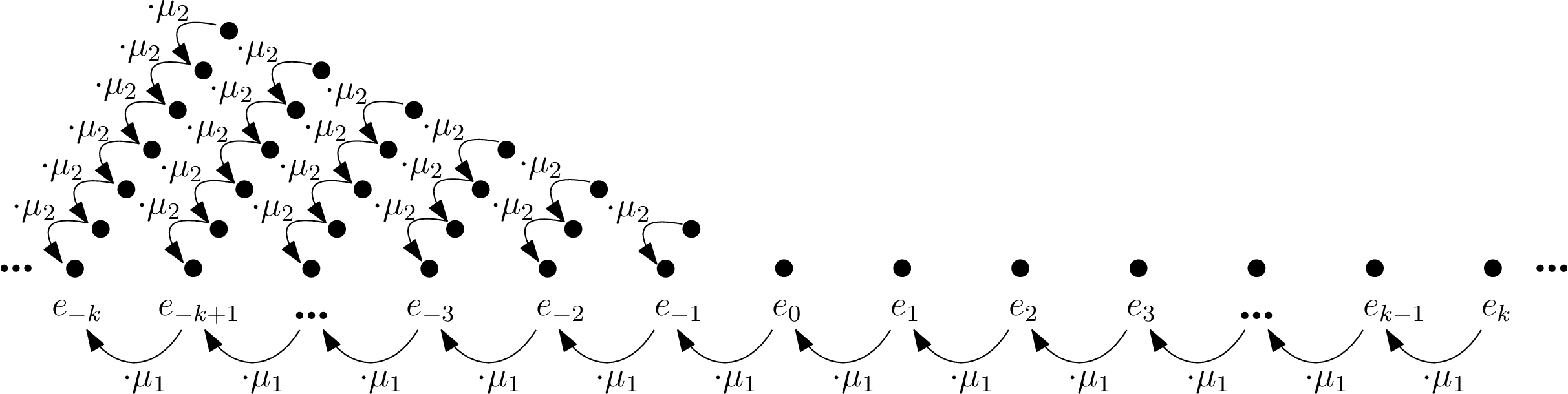}
		\caption{Graphic representation of the backward shift $B_{\sbf{\lambda}}$ from Section~\ref{Sec_3:non-invertible}.}\label{Fig:shift}
	\end{center}
\end{figure}

From now on we fix $X = \ell^p(V)$, with $1\leq p<\infty$, or $X=c_0(V)$ endowed with their usual norms, explicitly stated in Subsection~\ref{SubSec_2.2:shifts}. Note that the previous backward shift $B_{\sbf{\lambda}}:X\longrightarrow X$ is continuous because the sequence of weights $\sbf{\lambda}=(\lambda_v)_{v \in V}$ is bounded. Moreover, since $\ell^2(V)$ is a Hilbert space, the proof of Theorem~\ref{The:non-invertible} will be complete as soon as we check the following equality:
\begin{equation}\label{eq:main}
	CR(B_{\sbf{\lambda}}) = \cl{\lspan\{ e_n \ ; \ n \in \ZZ \}}.
\end{equation}
Indeed, if \eqref{eq:main} holds then $B_{\sbf{\lambda}}\res_{CR(B_{\sbf{\lambda}})}$ is exactly the classical bilateral backward shift on $\ZZ$ but multiplied by the parameter $\mu_1$, and since $|\mu_1|>1$ we have that the operator $B_{\sbf{\lambda}}\res_{CR(B_{\sbf{\lambda}})}$ is a {\em proper dilation} as defined in \cite[Page~6]{AntMantVar2022}, so that its set of chain recurrent vectors is exactly the singleton formed by the zero-vector $0_X$ (see \cite[Corollary~2.7]{AntMantVar2022} but also Theorem~\ref{The:classical} below). Let us check that \eqref{eq:main} holds.

\subsection[Calculating CR(B): the first inclusion]{Calculating $CR(B_{\sbf{\lambda}})$: the first inclusion}\label{SubSec_3.1:first}

We start by proving that
\begin{equation}\label{eq:inclusion1}
	CR(B_{\sbf{\lambda}}) \supset \cl{\lspan\{ e_n \ ; \ n \in \ZZ \}}.
\end{equation}
Since $CR(B_{\sbf{\lambda}})$ is a closed subspace (see \cite[Corollary~2.3]{AntMantVar2022} or \cite[Proposition~26]{BernardesPe2024_AM}) it is enough to check that the vector $e_n$ belongs to $CR(B_{\sbf{\lambda}})$ for every $n \in \ZZ$. We start with $e_0$:

\begin{fact}\label{Fact:e0.CR}
	The canonical unit sequence $e_0$ belongs to $CR(B_{\sbf{\lambda}})$.
\end{fact}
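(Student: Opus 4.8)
The plan is to verify directly from Definition~\ref{Def:chain-rec} that $e_0 \in CR(B_{\sbf{\lambda}})$ by exhibiting, for every $\delta>0$, a finite $\delta$-chain for $B_{\sbf{\lambda}}$ from $e_0$ back to $e_0$. By Lemma~\ref{Lem:useful}(b) it suffices to build a $\delta$-chain from $e_0$ to $0_X$ and another from $0_X$ to $e_0$; since $|\mu_1|>1$, the backward-shift dynamics on the $\ZZ$-axis will contract $e_0$ under iteration (because applying $B_{\sbf{\lambda}}$ moves mass toward $0$ and eventually off to the $-k$ vertices), so the two halves will be obtained in essentially dual ways.

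First I would handle the chain from $e_0$ to $0_X$. Apply $B_{\sbf{\lambda}}$ to $e_0$ repeatedly: $B_{\sbf{\lambda}}^{\,j}(e_0) = \mu_1^{\,j} e_{-j}$, which has norm $|\mu_1|^{j} \to \infty$, so naively iterating does \emph{not} go to zero. The correct idea is instead to iterate \emph{backward along a long branch}: pick $k$ large and consider the vector $\mu_1^{-k}\mu_2^{-k} e_{(-k,k)}$ (a tiny multiple of a far-out leaf on the $k$-th branch). Then $k$ applications of $B_{\sbf{\lambda}}$ send this leaf down the branch picking up one factor $\mu_2$ each step, arriving at $\mu_1^{-k} e_{-k}$; then $k$ further applications along the $\ZZ$-axis pick up one factor $\mu_1$ each step, arriving exactly at $e_0$. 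Reading this orbit \emph{in reverse} gives a length-$2k$ chain that starts near $0_X$ (the starting vector has norm $|\mu_1 \mu_2|^{-k}$, which is $<\delta$ for $k$ large since $|\mu_1\mu_2|>1$) and ends at $e_0$, with every consecutive difference equal to zero — a genuine $0$-chain, hence a $\delta$-chain, from a point of norm $<\delta$ to $e_0$; prepending the single step $0_X \mapsto (\text{that small leaf})$, whose defect is $<\delta$, yields a $\delta$-chain from $0_X$ to $e_0$. For the other direction, the chain from $e_0$ to $0_X$: start at $e_0$, apply $B_{\sbf{\lambda}}$ once to reach $\mu_1 e_{-1}$, but then \emph{jump} (incurring a small error) to $\mu_1^{-1}\mu_2^{-(k-1)} e_{(-k, k)}\cdot(\text{appropriate tiny scalar})$ — more cleanly: from $e_0$ move forward along the $\ZZ$-axis is expanding, so instead use that $e_0$ can be reached \emph{from} a small vector as above, and invoke Lemma~\ref{Lem:useful}(b) together with the fact that the unique chain recurrent class is a subspace; alternatively, note that $e_0$ lies in the closure of the forward orbit direction and construct the $e_0 \to 0_X$ chain symmetrically by choosing a long branch and grading the scalars so each step's error is $<\delta/2^l$. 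I would present whichever of these is cleanest once the arithmetic is pinned down; the key quantitative point is always that $|\mu_1\mu_2|>1$ lets arbitrarily small scalar multiples of far-out leaves flow down to $e_0$ under $B_{\sbf{\lambda}}$ with no error, and in reverse.

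The main obstacle is the \emph{asymmetry} of the shift: forward iteration on the $\ZZ$-axis is expanding (weights $>1$ in modulus), so one cannot get a chain from $e_0$ to $0_X$ by just iterating, and the branches are only finitely long, so one cannot retreat indefinitely up a single branch. The resolution — which must be executed carefully — is to use a \emph{different branch for each} $\delta$: for given $\delta$ choose $k=k(\delta)$ with $|\mu_1\mu_2|^{-k}<\delta$, route the chain through branch $k$, and accept a single small-defect "teleport" step near the root to enter that branch. Verifying that this teleport step has defect $<\delta$ (it connects $\mu_1 e_{-1}$, or $0_X$, to a scalar multiple of $e_{(-k,k)}$ — but these are far apart in norm!) is the delicate point, and it forces the correct construction to instead build the \emph{whole} $0_X$-to-$e_0$ chain inside branch $k$ as a literal orbit segment with zero defect plus one genuinely small initial step from $0_X$, and then obtain the $e_0$-to-$0_X$ chain by the symmetric device of running a long orbit segment that \emph{ends} at a tiny leaf-multiple and whose only nonzero defect is the final step into $0_X$. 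Getting these two halves to both have all defects $<\delta$ simultaneously, using only that $1<|\mu_1|<|\mu_2|$, is the crux; everything else is bookkeeping.
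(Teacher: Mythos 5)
There is a genuine gap: you have the direction of the flow on the $\ZZ$-axis backwards. Under $B_{\sbf{\lambda}}$, mass at a vertex moves to its \emph{parent}: $e_n \mapsto \mu_1 e_{n-1}$ on the axis, and $e_{(-k,j)} \mapsto \mu_2 e_{(-k,j-1)}$, $e_{(-k,1)} \mapsto \mu_2 e_{-k}$ on the branches. Hence mass coming down the $k$-th branch enters the axis at the vertex $-k$ and then continues to $-k-1,-k-2,\dots$, i.e.\ toward $-\infty$; it never reaches the vertex $0$. Your key step --- that after flowing the scaled leaf $\mu_1^{-k}\mu_2^{-k}e_{(-k,k)}$ down to $\mu_1^{-k}e_{-k}$, ``$k$ further applications along the $\ZZ$-axis pick up one factor $\mu_1$ each step, arriving exactly at $e_0$'' --- is false: $B_{\sbf{\lambda}}^{k}\bigl(\mu_1^{-k}e_{-k}\bigr)=e_{-2k}$. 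The only vertices whose orbits pass through $0$ are the positive integers, so the $0_X\to e_0$ half must be routed along the positive axis, exactly as in the paper: take $f_1=(1/\mu_1)^{m-1}e_{m-1}$ (small because $|\mu_1|>1$), whose exact orbit reaches $e_0$; the branches play no role in this half.

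The other half, from $e_0$ to $0_X$, is the one your write-up never actually constructs, and it cannot be obtained by the ``symmetric device'' you describe: the dynamics only moves \emph{down} branches, so no orbit segment ends at a tiny leaf multiple, and the orbit of $e_0$ itself expands since $B_{\sbf{\lambda}}^m(e_0)=\mu_1^m e_{-m}$. The missing idea is a cancellation using $|\mu_2|>|\mu_1|$: at the very first step perturb by $-(1/\mu_2)^{m-1}\mu_1^{m}\,e_{(-m,m-1)}$, a vector of norm $|\mu_1|^m/|\mu_2|^{m-1}$, which is $<\delta$ for $m$ large precisely because $|\mu_2|>|\mu_1|$; after $m-1$ further exact iterations this perturbation has been amplified by $\mu_2^{m-1}$ to $-\mu_1^m e_{-m}$ and cancels the axis term, so the chain terminates at $0_X$ with only one small defect. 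Note also that the inequality you cite as the quantitative driver, $|\mu_1\mu_2|>1$, is not the relevant one: the construction needs $|\mu_1|>1$ for the $0_X\to e_0$ chain and $|\mu_2|>|\mu_1|$ for the $e_0\to 0_X$ chain. As written, neither half of your $\delta$-chain is correct, so this is a genuine gap rather than a bookkeeping issue.
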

\begin{proof}
	Fix any $\delta>0$. By Lemma~\ref{Lem:useful} it is enough to find a $\delta$-chain for $B_{\sbf{\lambda}}$ from the vector $e_0$ to the zero-vector $0_X \in X$ and another $\delta$-chain for $B_{\sbf{\lambda}}$ from $0_X$ to $e_0$. Let us start by this last chain:
	\begin{enumerate}[--]
		\item \textbf{Step 1: from $0_X$ to  $e_0$}. Find $m_1 \in \NN$ with $m_1>1$ and such that $1 < \delta \cdot |\mu_1|^{m_1-1}$, and consider the chain
		\[
		\begin{cases}
			f_0 := 0_X, &\\[7.5pt]
			f_1 := \left( \tfrac{1}{\mu_1} \right)^{m_1-1} \cdot \ e_{m_1-1}, &\\[12.5pt]
			f_l := B_{\sbf{\lambda}}^{l-1}(f_1) & \text{ for } l=2,...,m_1.
		\end{cases}
		\]
		This is a $\delta$-chain since $\| f_1-B_{\sbf{\lambda}}(f_0) \| \ = \ \left( \tfrac{1}{|\mu_1|} \right)^{m_1-1} < \ \delta$. Moreover, $f_0=0_X$ and
		\[
		f_{m_1} = B_{\sbf{\lambda}}^{m_1-1}(f_1) = \mu_1^{m_1-1} \cdot \left( \tfrac{1}{\mu_1} \right)^{m_1-1} \cdot \ e_0 = e_0.
		\]
		
		\item \textbf{Step 2: from $e_0$ to $0_X$}. For this case find $m_2 \in \NN$ with $m_2>1$ and such that $|\mu_1|^{m_2} < \delta \cdot |\mu_2|^{m_2-1}$, and consider the chain
		\[
		\begin{cases}
			f_0 := e_0, &\\[7.5pt]
			f_1 := B_{\sbf{\lambda}}(f_0) - \left( \tfrac{1}{\mu_2} \right)^{m_2-1} \cdot \ \mu_1^{m_2} \ \cdot \ e_{(-m_2,m_2-1)}, &\\[12.5pt]
			f_l := B_{\sbf{\lambda}}^{l-1}(f_1) & \text{ for } l=2,...,m_2.
		\end{cases}
		\]
		This is a $\delta$-chain since $\| f_1-B_{\sbf{\lambda}}(f_0) \| = \left( \tfrac{1}{|\mu_2|} \right)^{m_2-1} \cdot \ |\mu_1|^{m_2} < \delta$. Moreover, $f_0=e_0$ and
		\begin{align*}
			f_{m_2} &= B_{\sbf{\lambda}}^{m_2-1}(f_1) = B_{\sbf{\lambda}}^{m_2}(f_0) - B_{\sbf{\lambda}}^{m_2-1}\left( \left(\tfrac{1}{\mu_2}\right)^{m_2-1} \cdot \ \mu_1^{m_2} \ \cdot \ e_{(-m_2,m_2-1)}\right)\\
			&= \mu_1^{m_2} \cdot e_{-m_2} - \mu_2^{m_2-1} \cdot \left(\tfrac{1}{\mu_2}\right)^{m_2-1} \cdot \ \mu_1^{m_2} \cdot e_{-m_2} = \mu_1^{m_2} \cdot e_{-m_2} - \mu_1^{m_2} \cdot e_{-m_2} = 0_X.
		\end{align*}
	\end{enumerate}
	The arbitrariness of $\delta>0$ together with \textbf{Step~1} and \textbf{Step~2} show that $e_0 \in CR(B_{\sbf{\lambda}})$.
\end{proof}

Finally, given any positive integer $n \in \NN$, it is not hard to repeat similar arguments to those employed in Fact~\ref{Fact:e0.CR} to show that $e_{-n} \in CR(B_{\sbf{\lambda}})$ since there are enough extra branches along the left side of the underlying tree $V$ to repeat the same relations stated but ``shifted $-n$ positions''. For the vector $e_n$ we can construct a $\delta$-chain for $B_{\sbf{\lambda}}$ from $0_X$ to $e_n$ as in \textbf{Step~1} of Fact~\ref{Fact:e0.CR}, while for the required $\delta$-chain from $e_n$ to $0_X$ we can use the equality $B_{\sbf{\lambda}}^n(e_n) = \mu_1^n \cdot e_0$ and from there to slightly modify \textbf{Step~2} of Fact~\ref{Fact:e0.CR} by considering a longer chain if necessary. We deduce that \eqref{eq:inclusion1} holds.

\subsection[Calculating CR(B): the second inclusion]{Calculating $CR(B_{\sbf{\lambda}})$: the second inclusion}\label{SubSec_3.2:second}

We now complete the proof of Theorem~\ref{The:non-invertible} by showing that
\begin{equation}\label{eq:inclusion2}
	CR(B_{\sbf{\lambda}}) \subset \cl{\lspan\{ e_n \ ; \ n \in \ZZ \}}.
\end{equation}
In particular, we will have that \eqref{eq:inclusion2} holds as soon as we prove the following fact:

\begin{fact}\label{Fact:non.CR}
	Let $f = (f(v))_{v\in V} \in X$. If $f \notin \cl{\lspan\{ e_n \ ; \ n \in \ZZ \}}$, then $f \notin CR(B_{\sbf{\lambda}})$.
\end{fact}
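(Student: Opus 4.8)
The plan is to show that if $f$ has a nonzero coordinate on one of the ``extra branch'' vertices, then the orbit of $f$ (and any $\delta$-chain emanating from $f$ for $\delta$ small enough) is pushed irrecoverably away from $f$, so $f$ cannot be chain recurrent. Concretely, write $f \notin \cl{\lspan\{e_n \ ; \ n\in\ZZ\}}$ as meaning there is some $k_0\in\NN$ and some $1\le j_0\le k_0$ with $f((-k_0,j_0))\ne 0$; let $a:=|f((-k_0,j_0))|>0$. The key structural observation is that the subtree hanging off the vertex $-k_0$ along the extra branch is a finite path of length $k_0$ on which $B_{\sbf{\lambda}}$ acts as multiplication by $\mu_2$ at each step, with $|\mu_2|>|\mu_1|>1$, and nothing ever flows \emph{into} these branch vertices under $B_{\sbf\lambda}$ except from deeper in the same branch. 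So the $\ZZ$-line coordinates and the extra-branch coordinates interact only in one direction (branch feeds into the line at $-k$), which lets me track what happens on the finite branch independently.

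The main step is a quantitative ``escape'' estimate via Lemma~\ref{Lem:useful}(a). Suppose for contradiction $f\in CR(B_{\sbf\lambda})$ and fix a small $\delta>0$ (to be chosen). For any $\delta$-chain $(f_l)_{l=0}^m$ from $f$ to $f$, Lemma~\ref{Lem:useful}(a) gives $f = f_m = B_{\sbf\lambda}^m(f) + \sum_{l=1}^m B_{\sbf\lambda}^{m-l}(g_l)$ with $\|g_l\|<\delta$. I would evaluate the coordinate of both sides at the vertex $(-k_0,j_0)$. On the right, $[B_{\sbf\lambda}^{m-l}(g_l)]((-k_0,j_0))$ is either $\mu_2^{\,m-l}\,g_l\big((-k_0,j_0+(m-l))\big)$ when $j_0+(m-l)\le k_0$, or $0$ once $m-l\ge k_0-j_0+1$ (the branch is finite, so after enough backward steps that coordinate can only receive contributions from vertices that no longer exist — i.e.\ it receives nothing). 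Similarly $[B_{\sbf\lambda}^m(f)]((-k_0,j_0))=\mu_2^m f((-k_0,j_0))$ if $m\le k_0-j_0$ and $0$ otherwise. Choosing $m$ large (which we may, since $m$ ranges over all lengths $\ge 1$ and a chain recurrent vector admits $\delta$-chains of every sufficiently large length — e.g.\ by padding using $B_{\sbf\lambda}$-iterates as in the Facts above, or directly from Lemma~\ref{Lem:useful}(b)), the term $B_{\sbf\lambda}^m(f)$ contributes $0$ at $(-k_0,j_0)$, and only the finitely many $g_l$ with $m-l\le k_0-j_0$ — that is, $l\ge m-(k_0-j_0)$ — contribute, each bounded by $|\mu_2|^{k_0-j_0}\delta$ in absolute value. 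Hence $a=|f((-k_0,j_0))|\le (k_0-j_0+1)\,|\mu_2|^{k_0-j_0}\,\delta$, which fails once $\delta$ is chosen smaller than $a/\big((k_0-j_0+1)|\mu_2|^{k_0-j_0}\big)$ — contradiction. Therefore $f\notin CR(B_{\sbf\lambda})$.

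The point I expect to need the most care is the claim that a chain recurrent vector admits $\delta$-chains back to itself of \emph{arbitrarily large length}, since the coordinate at $(-k_0,j_0)$ only gets ``emptied'' by $B_{\sbf\lambda}^m(f)$ once $m>k_0-j_0$. This should follow cleanly from $B_{\sbf\lambda}(0_X)=0_X$ together with Lemma~\ref{Lem:useful}(b): concatenate a $\delta$-chain from $f$ to $0_X$, then stay at $0_X$ for as many steps as desired (each step is exact since $B_{\sbf\lambda}(0_X)=0_X$), then a $\delta$-chain from $0_X$ back to $f$; this produces $\delta$-chains from $f$ to $f$ of every length above some threshold. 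Once that is in hand, the estimate above is just bookkeeping on a finite path where $B_{\sbf\lambda}$ scales by $\mu_2$. Combining Fact~\ref{Fact:non.CR} with \eqref{eq:inclusion1} yields \eqref{eq:main}, completing the proof of Theorem~\ref{The:non-invertible}.
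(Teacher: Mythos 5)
Your proposal is correct, but it takes a slightly different route from the paper's own proof of Fact~\ref{Fact:non.CR}. The paper fixes the \emph{maximal} index $j_k$ with $f(-k,j_k)\neq 0$ along the branch and then works with an arbitrary $\delta$-chain from $f$ to itself of \emph{whatever} length $m$ it happens to have, splitting into the two cases $m\le k-j_k$ and $m>k-j_k$: in the short-chain case the term $[B_{\sbf{\lambda}}^m(f)](-k,j_k)=\mu_2^m f(-k,j_k+m)$ vanishes precisely because of the maximality of $j_k$, and in the long-chain case it vanishes because the branch is finite. You instead take \emph{any} nonzero branch coordinate $(-k_0,j_0)$ and force the chain length to be large, by invoking the necessity direction of Lemma~\ref{Lem:useful}(b) and padding at $0_X$ (legitimate, since $B_{\sbf{\lambda}}(0_X)=0_X$ gives exact steps), so that only the long-chain situation occurs and no case distinction or maximality is needed; the resulting estimate $|f(-k_0,j_0)|\le (k_0-j_0+1)|\mu_2|^{k_0-j_0}\delta$ and the choice of $\delta$ match the paper's. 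What your route buys is a cleaner single-case argument --- indeed it is essentially the mechanism the paper itself uses later in Fact~\ref{Fact:non.CR.T} (Case 1), where the chain is taken from $0_X$ to $f$ so that the $B_{\sbf{\lambda}}^m$-term disappears for free; what it costs is an appeal to Lemma~\ref{Lem:useful}(b), i.e.\ to the unique-chain-class fact, which the paper's Section~3 argument avoids by handling short chains directly. A micro-simplification of your own plan: rather than padding a self-chain, you could directly use the $\delta$-chain from $0_X$ to $f$ provided by Lemma~\ref{Lem:useful}(b), since then $B_{\sbf{\lambda}}^m(f_0)=0_X$ identically and no lower bound on $m$ is needed.
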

\begin{proof}
	By assumption, there exists $k \in \NN$ such that $|f(-k,j)|>0$ for some $1\leq j\leq k$. Let
	\[
	j_k := \max\{ 1 \leq j \leq k \ ; \ f(-k,j)\neq 0 \},
	\]
	and consider a positive value $\delta>0$ small enough to fulfill the inequality
	\begin{equation}\label{eq:delta}
		\delta < \frac{\left| f(-k,j_k) \right|}{(k-j_k+1) \cdot |\mu_2|^{(k-j_k)}}.
	\end{equation}
	By contradiction assume that $f \in CR(B_{\sbf{\lambda}})$, and hence that there exists a $\delta$-chain $(f_l)_{l=0}^m$ for $B_{\sbf{\lambda}}$ from the vector $f$ to itself. Thus, by Lemma~\ref{Lem:useful} there exists a finite sequence $(g_l)_{l=1}^m$ in $X$ such that
	\begin{equation}\label{eq:coordinate}
		f = f_m = B_{\sbf{\lambda}}^m(f_0) + \sum_{l=1}^m B_{\sbf{\lambda}}^{m-l}(g_l) \quad \text{ and } \quad | g_l(v) | < \delta \text{ for all } 1\leq l\leq m \text{ and } v \in V,
	\end{equation}
	which implies that
	\begin{equation}\label{eq:f}
		f(-k,j_k) = [B_{\sbf{\lambda}}^m(f)](-k,j_k) + \sum_{l=0}^{m-1} [B_{\sbf{\lambda}}^{l}(g_{m-l})](-k,j_k).
	\end{equation}
	From now on we have two possibilities:
	\begin{enumerate}[--]
		\item \textbf{Case 1: $m \leq k-j_k$}. Hence, by \eqref{eq:f} and since $1\leq m\leq k-j_k$ we have that
		\[
		f(-k,j_k) = \mu_2^m \cdot f(-k,j_k+m) + \sum_{l=0}^{m-1} \mu_2^l \cdot g_{m-l}(-k,j_k+l).
		\]
		Moreover, $f(-k,j_k+m)=0$ by the definition of $j_k$ so that using \eqref{eq:coordinate} and then \eqref{eq:delta} we get that
		\[
		\left| f(-k,j_k) \right| \leq \sum_{l=0}^{m-1} |\mu_2|^l \cdot \left| g_{m-l}(-k,j_k+l) \right| < (k-j_k) \cdot |\mu_2|^{(k-j_k)} \cdot \delta < \left| f(-k,j_k) \right|,
		\]
		contradicting the existence of the $\delta$-chain $(f_l)_{l=0}^m$.
		
		\item \textbf{Case 2: $m > k-j_k$}. In this case we start by noticing that $[B_{\sbf{\lambda}}^l(g)](-k,j_k)=0$ for every $g \in X$ and every $l>k-j_k$. Hence, by \eqref{eq:f} we have the equality
		\[
		f(-k,j_k) = \sum_{l=0}^{k-j_k} [B_{\sbf{\lambda}}^l(g_{m-l})](-k,j_k) = \sum_{l=0}^{k-j_k} \mu_2^l \cdot g_{m-l}(-k,j_k+l),
		\]
		so that, using again \eqref{eq:coordinate} and then \eqref{eq:delta}, we get the contradiction
		\[
		\left| f(-k,j_k) \right| \leq \sum_{l=0}^{k-j_k} |\mu_2|^l \cdot \left| g_{m-l}(-k,j_k+l) \right| < (k-j_k+1) \cdot |\mu_2|^{(k-j_k)} \cdot \delta < \left| f(-k,j_k) \right|,
		\]
		which finally shows that $f \notin CR(B_{\sbf{\lambda}})$.\qedhere
	\end{enumerate}
\end{proof}

\subsection{The subspace of chain recurrent vectors can be finite-dimensional}\label{SubSec_3.3:finite}

The arguments exhibited in Subsections~\ref{SubSec_3.1:first} and \ref{SubSec_3.2:second} complete the proof of Theorem~\ref{The:non-invertible} in its full generality. In fact, the set of chain recurrent vectors obtained $CR(B_{\sbf{\lambda}})$ is an infinite-dimensional subspace of the respective Banach space $X$ while $CR(B_{\sbf{\lambda}}\res_{CR(B_{\sbf{\lambda}})})=\{0_X\}$. However, the reader may ask the following question proposed to us by the anonymous reviewer:
\begin{enumerate}[--]
	\item {\em Is there any operator $T \in \Lc(X)$ acting on a Banach or Fr\'echet space $X$ for which $CR(T)\neq\{0_X\}$ is a finite-dimensional subspace of $X$ such that $CR(T\res_{CR(T)})=\{0_X\}$?}
\end{enumerate}

The answer is positive and one can also achieve this behaviour with a backward shift on a directed tree. For instance, let $V' := \left(\ZZ\setminus\NN\right) \cup \{ (-k,j) \ ; \ k \in \NN \text{ and } 1\leq j\leq k \}$, and consider it as a tree with
\[
\begin{cases}
	\text{par}(n) := n-1 & \text{ for each } n \in \ZZ\setminus\NN,\\[7.5pt]
	\text{par}((-k,j)) := (-k,j-1) & \text{ for each } k \in \NN \text{ and } 1< j\leq k,\\[7.5pt]
	\text{par}((-k,1)) := 0 & \text{ for all } k \in \NN.
\end{cases}
\]
See Figure~\ref{Fig:tree2} for a graphic representation of the respective directed tree $V'$.
\begin{figure}[H]
	\begin{center}
		\includegraphics[width=16cm]{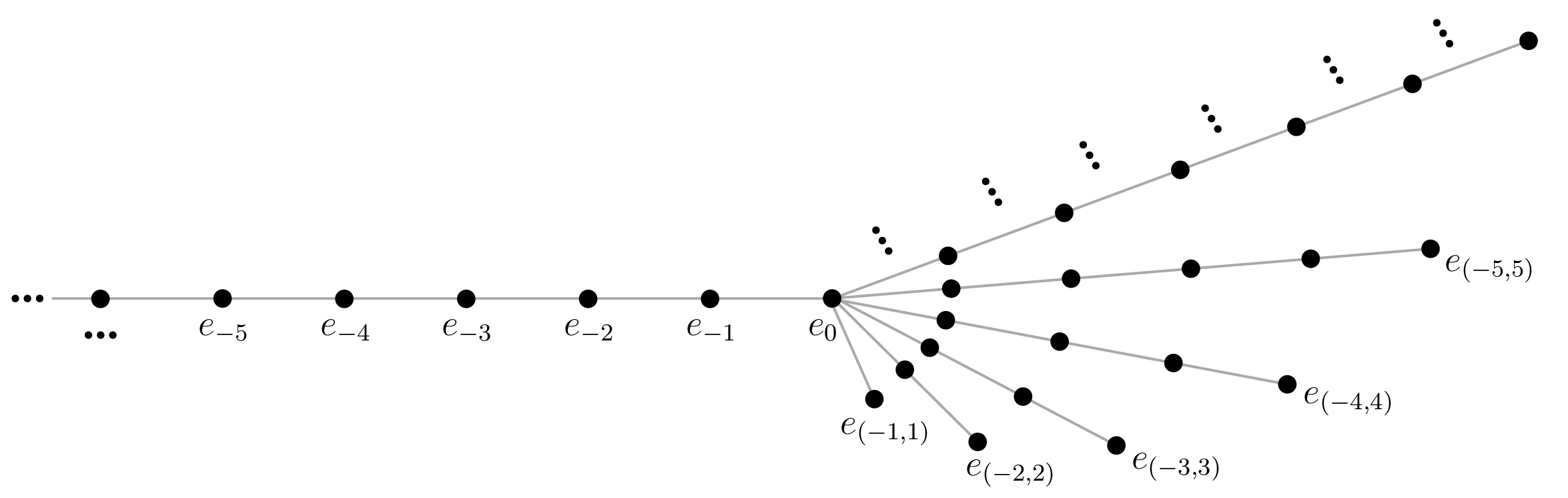}
		\caption{Graphic representation of the directed tree $V'$ from Subsection~\ref{SubSec_3.3:finite}.}\label{Fig:tree2}
	\end{center}
\end{figure}

From now on we fix $X = \ell^p(V')$, with $1\leq p<\infty$, or $X=c_0(V')$ endowed with their usual norms explicitly stated in Subsection~\ref{SubSec_2.2:shifts}. We are going to consider a continuous (or well-defined) backward shift $B_{\sbf{\lambda'}}:X\longrightarrow X$ associated to a sequence of weights $\sbf{\lambda'}=(\lambda_v')_{v \in V'} \in \KK^{V'}$ fulfilling that $CR(B_{\sbf{\lambda'}})$ is a finite-dimensional subspace while $CR(B_{\sbf{\lambda'}}\res_{CR(B_{\sbf{\lambda'}})})=\{0_X\}$. To this end we fix some $N \in \NN$, which will be the dimension of $CR(B_{\sbf{\lambda'}})$, and given a real or complex value $\mu \in \KK$ fulfilling that $|\mu|>2$ we can consider the sequence of weights $\sbf{\lambda'}=(\lambda_v')_{v \in V'} \in \KK^{V'}$ with
\[
\begin{cases}
	\lambda_n' := 0 & \text{ for each } n \in \ZZ\setminus\NN \text{ with } |n|\geq N-1,\\[7.5pt]
	\lambda_n' := 1 & \text{ for each } n \in \ZZ\setminus\NN \text{ with } |n|<N-1,\\[7.5pt]
	\lambda_{(-k,j)}' := \mu & \text{ for each } k \in \NN \text{ and } 1< j\leq k,\\[7.5pt]
	\lambda_{(-k,1)}' := \tfrac{1}{2^k} & \text{ for each } k \in \NN.
\end{cases}
\]
Hence, the image under the associated weighted backward shift $B_{\sbf{\lambda'}}$ for each of the canonical unit sequences $(e_v)_{v\in V}$ is the following
\[
\begin{cases}
	e_n \longmapsto 0_X & \text{ for each } n \in \ZZ\setminus\NN \text{ with } |n|\geq N-1,\\[7.5pt]
	e_n \longmapsto e_{n-1} & \text{ for each } n \in \ZZ\setminus\NN \text{ with } |n|<N-1,\\[7.5pt]
	e_{(-k,j)} \longmapsto \mu \cdot e_{(-k,j-1)} & \text{ for each } k \in \NN \text{ and } 1< j\leq k,\\[7.5pt]
	e_{(-k,1)} \longmapsto \tfrac{1}{2^k} \cdot e_0 & \text{ for each } k \in \NN.
\end{cases}
\]
See Figure~\ref{Fig:shift2} for a graphic representation of the respective backward shift $B_{\sbf{\lambda'}}$.
\begin{figure}[H]
	\begin{center}
		\includegraphics[width=16cm]{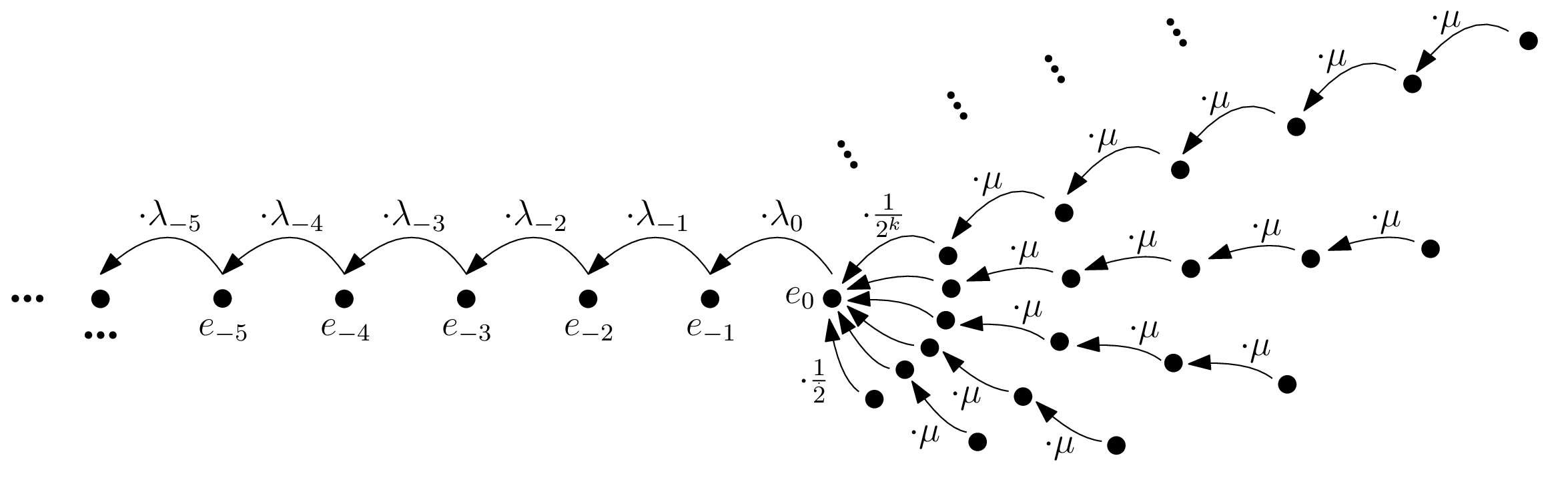}
		\caption{Graphic representation of the backward shift $B_{\sbf{\lambda'}}$ from Subsection~\ref{SubSec_3.3:finite}.}\label{Fig:shift2}
	\end{center}
\end{figure}

Using the Closed Graph Theorem, one can easily check that $B_{\sbf{\lambda'}}:X\longrightarrow X$ is continuous since:
\begin{enumerate}[--]
	\item the sequence of weights $\sbf{\lambda'}=(\lambda_v')_{v \in V'} \in \KK^{V'}$ is bounded;
	
	\item and although $0 \in V'$ is a vertex with infinitely many children, $\text{Chi}(0)=\{ (-k,1) \ ; \ k \in \NN \}$, the weights associated to these children form an $\ell^1(\NN)$-sequence, namely $(\tfrac{1}{2^k})_{k\in\NN}$.
\end{enumerate}
Similar backward shifts have been used in \cite{MenetPa2024_arXiv} and our shift $B_{\sbf{\lambda'}}$ is in the spirit of the operator considered in \cite[Proposition~3.10]{MenetPa2024_arXiv}. Arguing as in Subsections~\ref{SubSec_3.1:first} and \ref{SubSec_3.2:second} one can check that
\begin{equation}\label{eq:main2}
	CR(B_{\sbf{\lambda'}}) = \cl{\lspan\{ e_n \ ; \ n \in \ZZ\setminus\NN \text{ with } |n|<N \}},
\end{equation}
which fulfills that $\dim(CR(B_{\sbf{\lambda'}}))=N$. Moreover, once \eqref{eq:main2} holds, it is not hard to check that the restricted operator $B_{\sbf{\lambda'}}\res_{CR(B_{\sbf{\lambda'}})}:CR(B_{\sbf{\lambda'}})\longrightarrow CR(B_{\sbf{\lambda'}})$ is a finite-dimensional operator (a matrix) whose spectrum is the singleton formed by the value $0$. Hence, such a restriction is a {\em proper contraction} as defined in \cite[Page~6]{AntMantVar2022}, so that one can finally deduce the desired equality $CR(B_{\sbf{\lambda'}}\res_{CR(B_{\sbf{\lambda'}})})=\{0_X\}$. See \cite[Proposition~2.5]{AntMantVar2022}, but also Lemma~\ref{Lem:finite-dimension} and Theorem~\ref{The:finite-dimension} below, for more details.

Let us mention that an easy modification of the backward shift considered in the previous lines can be used to obtain, for any pair of positive integers $1\leq L<M \in \NN$, a continuous linear operator $T \in \Lc(X)$ fulfilling that $\dim(CR(T))=M$ and $\dim(CR(T\res_{CR(T)})) = L$. Indeed, if apart from the tree $V'$ and the backward shift $B_{\sbf{\lambda'}}$ considered in Figures~\ref{Fig:tree2}~and~\ref{Fig:shift2} we let $M=N+L$ and we add some extra vertices (say $\{1,2,...,L\}$), then the operator that equals $B_{\sbf{\lambda'}}$ in $V'$ and leaves the extra canonical unit sequences $e_1=\chi_1$, $e_2=\chi_2$, ..., $e_L=\chi_L$ as fixed points fulfills the required conditions.

\section{The invertible case}\label{Sec_4:invertible}

In this section we construct an invertible counterexample for Question~\ref{Q:main}, hence completing the proof of Theorem~\ref{The:main}. This example is highly inspired by the weighted shift considered in Section~\ref{Sec_3:non-invertible}, but in this case we are going to explore a bit more the possible parameters that we can choose to construct such an invertible operator. Our counterexample in this section can be seen as a backward shift on a directed graph, instead than on a directed tree, as noticed by the anonymous reviewer. This class of operators has been considered in \cite{BaranovLiPa2023_arXiv}, and every continuous linear operator on a Banach space with a Schauder basis can be seen as a backward shift on a directed graph (see \cite[Remark~7.3]{BaranovLiPa2023_arXiv}).

Before starting the construction we would like to remark that weighted shifts on directed trees cannot be used to achieve an invertible counterexample for Question~\ref{Q:main}. Indeed, if we consider a directed tree $(V,E)$ with a vertex $v \in V$ having more than one children (that is, if $(v,u)$ and $(v,w)$ belong to the set of directed edges $E$ for two different vertices $u\neq w \in V$), then any weighted backward shift on $V$ is not an injective operator. This trivial observation shows that the only invertible shifts acting on directed trees (as defined in Definition~\ref{Def:shifts}) are the classical weighted backward shifts on the unrooted tree $\ZZ$ and, as we show in Section~\ref{Sec_5:classical}, one cannot solve Question~\ref{Q:main} with these operators because a classical (unilateral or bilateral) weighted backward shift is a chain recurrent operator as soon as it admits a non-zero chain recurrent vector (see Theorem~\ref{The:classical} and Corollary~\ref{Cor:classical} below).

Moreover, note that the particular operator exhibited in Section~\ref{Sec_3:non-invertible} was also not surjective since every vector of the type $e_{(-k,k)}$, for each $k \in \NN$, had no preimage. We can solve both the injectivity and surjectivity problems by adding some extra points in the starting set $V$.

\textbf{\textit{The rest of this section is devoted to prove the invertible case of Theorem~\ref{The:main}}}:

From now on consider the countable set
\[
V := \ZZ \cup \left( \bigcup_{k\in\NN} \{ (-k,j) \ ; \ j \in \ZZ \} \right),
\]
and the respective space $\KK^V$ of all (real or complex) sequences over $V$. As in Section~\ref{Sec_3:non-invertible} we now fix two values $\mu_1,\mu_2 \in \KK$ fulfilling that $1<|\mu_1|<|\mu_2|$. Now we consider a kind of infinite matrix, indexed by the countable set $\NN\times\ZZ$ and denoted by $(\lambda_{(-k,j)})_{(k,j)\in\NN\times\ZZ} \in \KK^{\NN\times\ZZ}$, for which
\[
\lambda_{(-k,j)} :=
\begin{cases}
	\mu_2 & \text{ for each } k \in \NN \text{ and } \hspace{0.45cm} 1\leq j\leq k,\\[7.5pt]
	\frac{1}{\mu_2} & \text{ for each } k \in \NN \text{ and } -k\leq j\leq 0,
\end{cases}
\]
but also fulfilling that
\begin{equation}\label{eq:continuity}
	0 \ < \ \inf_{(k,j) \in \NN\times\ZZ} \left|\lambda_{(-k,j)}\right| \ \leq \ \sup_{(k,j) \in \NN\times\ZZ} \left|\lambda_{(-k,j)}\right| \ < \ \infty,
\end{equation}
and at least one of the following conditions
\begin{equation}\label{eq:non.CR}
	\sum_{j=1}^{\infty} \left| \lambda_{(-k,1)} \cdots \lambda_{(-k,j)} \right| < \infty \quad \text{ or } \quad \sum_{j=1}^{\infty} \left| \lambda_{(-k,-(j-1))} \cdots \lambda_{(-k,0)} \right|^{-1} < \infty \quad \text{ for each } k \in \NN.
\end{equation}
Note that \eqref{eq:continuity} and both conditions stated in \eqref{eq:non.CR} can be achieved for every $k \in \NN$ at the same time if, for instance, we let
\[
\lambda_{(-k,j)} =
\begin{cases}
	\frac{1}{\mu_2} & \text{ for each pair } (k,j) \in \NN\times\ZZ \text{ with } \ k<j,\\[7.5pt]
	\mu_2 & \text{ for each pair } (k,j) \in \NN\times\ZZ \text{ with } \ j<-k,
\end{cases}
\]
so that there exist many matrices $(\lambda_{(-k,j)})_{(k,j)\in\NN\times\ZZ} \in \KK^{\NN\times\ZZ}$ fulfilling the required conditions.\newpage

From now on let $T:\KK^V\longrightarrow\KK^V$ be the unique linear map that acts on each of the canonical unit sequences $e_v = \chi_v$, for $v \in V$, as
\[
T(e_v) :=
\begin{cases}
	\mu_1 \cdot e_{n-1} & \text{ if } v=n \in \ZZ,\\[7.5pt]
	\lambda_{(-k,j)} \cdot e_{(-k,j-1)} & \text{ if } v=(-k,j) \text{ with } k \in \NN \text{ and } j \in \ZZ\setminus\{1\},\\[7.5pt]
	\mu_2 \cdot \left( e_{(-k,0)} + e_{-k} \right) & \text{ if } v=(-k,1) \text{ for some } k \in \NN.
\end{cases}
\]
As in Section~\ref{Sec_3:non-invertible} let $X$ be any of the Banach (or Hilbert) spaces $\ell^p(V)$, with $1\leq p<\infty$, or $c_0(V)$ endowed with their usual norms (see Subsection~\ref{SubSec_2.2:shifts}). Note that the restriction of $T$ to the respective sequence space $X$, still denoted by $T:X\longrightarrow X$, is continuous and even invertible by \eqref{eq:continuity}. Indeed, it is not hard to check that $T^{-1}:X\longrightarrow X$ is the continuous operator fulfilling that
\[
T^{-1}(e_v) =
\begin{cases}
	\frac{1}{\mu_1} \cdot e_{n+1} & \text{ if } v=n \in \ZZ,\\[7.5pt]
	\frac{1}{\lambda_{(-k,j+1)}} \cdot e_{(-k,j+1)} & \text{ if } v=(-k,j) \text{ with } k \in \NN \text{ and } j \in \ZZ\setminus\{0\},\\[7.5pt]
	\frac{1}{\mu_2} \cdot e_{(-k,1)} - \frac{1}{\mu_1} \cdot e_{-k+1} & \text{ if } v=(-k,0) \text{ for some } k \in \NN.
\end{cases}
\]
Moreover, and exactly as in Section~\ref{Sec_3:non-invertible}, the proof of Theorem~\ref{The:main} will be complete as soon as we check the following equality:
\begin{equation}\label{eq:mainT}
	CR(T) = \cl{\lspan\{ e_n \ ; \ n \in \ZZ \}}.
\end{equation}
Indeed, if \eqref{eq:mainT} holds then $T\res_{CR(T)}$ is again the classical bilateral backward shift on $\ZZ$ multiplied by the parameter $\mu_1$, and again $|\mu_1|>1$ so that $T\res_{CR(T)}$ is a {\em proper dilation} as defined in \cite[Page~6]{AntMantVar2022} and its set of chain recurrent vectors is precisely the singleton formed by the zero-vector $0_X \in X$ as shown in \cite[Corollary~2.7]{AntMantVar2022}; see also Theorem~\ref{The:classical} below. Let us check that \eqref{eq:mainT} holds.

\subsection[Calculating CR(T): the first inclusion]{Calculating $CR(T)$: the first inclusion}

Following the strategy exhibited in Section~\ref{Sec_3:non-invertible}, we start by proving that
\begin{equation}\label{eq:inclusion1T}
	CR(T) \supset \cl{\lspan\{ e_n \ ; \ n \in \ZZ \}}.
\end{equation}
Recall that it is enough checking $e_n \in CR(T)$ for every $n \in \ZZ$. As we argued in Section~\ref{Sec_3:non-invertible}:

\begin{fact}\label{Fact:e0.CR.T}
	The canonical unit sequence $e_0$ belongs to $CR(T)$.
\end{fact}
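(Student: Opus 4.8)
The plan is to mimic the proof of Fact~\ref{Fact:e0.CR} almost verbatim. By Lemma~\ref{Lem:useful}(b) it suffices, given $\delta>0$, to exhibit a $\delta$-chain for $T$ from $0_X$ to $e_0$ and another one from $e_0$ to $0_X$. For the first of these the chain of \textbf{Step~1} in Fact~\ref{Fact:e0.CR} works without any change, because $T$ acts on the branch $\ZZ\subset V$ exactly as the classical $\mu_1$-weighted backward shift on $\ZZ$: choose $m_1>1$ with $\left(\tfrac{1}{|\mu_1|}\right)^{m_1-1}<\delta$, set $f_0:=0_X$, $f_1:=\left(\tfrac{1}{\mu_1}\right)^{m_1-1}e_{m_1-1}$ and $f_l:=T^{l-1}(f_1)$ for $l=2,\dots,m_1$; then $f_{m_1}=e_0$ and the only nonzero jump has size $\left(\tfrac{1}{|\mu_1|}\right)^{m_1-1}<\delta$.

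For the chain from $e_0$ to $0_X$ the new feature of the tree enters, and here I would again copy \textbf{Step~2} of Fact~\ref{Fact:e0.CR}, but routed through one branch $\{(-k,j)\ ;\ j\in\ZZ\}$ with the index $k$ taken \emph{large}. The crucial remark is that, no matter how the weights are completed outside of the segment $-k\le j\le k$ by \eqref{eq:continuity}--\eqref{eq:non.CR}, on that segment they are forced to be $\lambda_{(-k,j)}=\mu_2$ for $1\le j\le k$ and $\lambda_{(-k,j)}=\tfrac{1}{\mu_2}$ for $-k\le j\le 0$; hence near the vertex $-k$ the operator $T$ behaves precisely like the shift of Section~\ref{Sec_3:non-invertible}, with a $\mu_2$-expanding upper half $j\ge1$ feeding into $-k$ (through $T(e_{(-k,1)})=\mu_2(e_{(-k,0)}+e_{-k})$) and a $\mu_2$-contracting lower half $-k\le j\le 0$. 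So I would set $f_0:=e_0$, inject a single small branch term
\[
f_1:=T(f_0)-\tfrac{\mu_1^{\,k}}{\mu_2^{\,k-1}}\cdot e_{(-k,k-1)},
\]
let $f_l:=T^{l-1}(f_1)$ for $l=2,\dots,2k$, and finally put $f_{2k+1}:=0_X$. A direct computation with the explicit action of $T$ shows that the $\ZZ$-part $\mu_1^{\,k}e_{-k}$ and the branch part grow in parallel and cancel \emph{exactly} at the vertex $-k$ at step $k$, leaving $f_k=-\mu_1^{\,k}e_{(-k,0)}$; iterating $k$ more times down the contracting half gives $f_{2k}=-\left(\tfrac{\mu_1}{\mu_2}\right)^{k}e_{(-k,-k)}$, a vector of tiny norm. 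The only nonzero jumps are the first one, of size $|\mu_1|\left(\tfrac{|\mu_1|}{|\mu_2|}\right)^{k-1}$, and the last one, of size $\tfrac{1}{|\mu_2|}\left(\tfrac{|\mu_1|}{|\mu_2|}\right)^{k}$; since $1<|\mu_1|<|\mu_2|$ both tend to $0$ as $k\to\infty$, so for $k$ large enough this is a $\delta$-chain from $e_0$ to $0_X$, and Lemma~\ref{Lem:useful}(b) yields $e_0\in CR(T)$.

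I expect the second chain to be the only real obstacle, and within it the delicate point is the synchronization: the injected branch term must sit at exactly the height $(-k,k-1)$ and carry exactly the coefficient $\mu_1^{\,k}/\mu_2^{\,k-1}$ so that, after drifting down the branch (where $T$ amplifies by $\mu_2$), it meets and \emph{exactly} annihilates the $\ZZ$-drift term (amplified by $\mu_1$) right at the attaching vertex $-k$, and so that the unavoidable residue is deposited on the contracting half $\{(-k,j)\ ;\ -k\le j\le 0\}$ instead of escaping to $-\infty$ along $\ZZ$. The arithmetic making all jumps simultaneously smaller than $\delta$ closes only because $1<|\mu_1|<|\mu_2|$ and $k$ is a free large parameter, exactly as in Fact~\ref{Fact:e0.CR}; the doubly-infinite branch (as opposed to the finite branches of Section~\ref{Sec_3:non-invertible}) contributes only the extra bookkeeping of the final contracting segment. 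Note that condition \eqref{eq:non.CR} is not needed here: it will only be used for the reverse inclusion $CR(T)\subset\cl{\lspan\{e_n\ ;\ n\in\ZZ\}}$.
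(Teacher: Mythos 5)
Your proposal is correct and follows essentially the same route as the paper: Step 1 is identical, and your Step 2 injects the same correction term $\tfrac{\mu_1^{k}}{\mu_2^{k-1}}e_{(-k,k-1)}$ so that the branch drift cancels the $\ZZ$-drift exactly at the vertex $-k$, leaving $-\mu_1^{k}e_{(-k,0)}$, which then contracts along the lower half before the final jump to $0_X$. The only (harmless) difference is bookkeeping: the paper truncates the chain at length $2n-1$ with $n$ chosen so that $|\mu_1|^{n}<\delta|\mu_2|^{n-1}$, while you run two extra steps down to $(-k,-k)$ and choose $k$ large; both yield jumps bounded by $|\mu_1|^{k}/|\mu_2|^{k-1}<\delta$.
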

\begin{proof}
	As in Fact~\ref{Fact:e0.CR}, we fix any $\delta>0$ and by Lemma~\ref{Lem:useful} we just have to find a $\delta$-chain for $T$ from the vector $e_0$ to the zero-vector $0_X \in X$ and another $\delta$-chain for $T$ from $0_X$ to $e_0$.
	\begin{enumerate}[--]
		\item \textbf{Step 1: from $0_X$ to  $e_0$}. If we find $m_1 \in \NN$ with $m_1>1$ and $1 < \delta \cdot |\mu_1|^{m_1-1}$, then the finite sequence $(f_l)_{l=0}^{m_1}$ in $X$ defined as
		\[
		\begin{cases}
			f_0 := 0_X, &\\[7.5pt]
			f_1 := \left( \tfrac{1}{\mu_1} \right)^{m_1-1} \cdot \ e_{m_1-1}, &\\[12.5pt]
			f_l := T^{l-1}(f_1) & \text{ for } l=2,...,m_1,
		\end{cases}
		\]
		can be easily checked to be a $\delta$-chain for $T$ from $0_X$ to $e_0$, exactly as in \textbf{Step 1} of Fact~\ref{Fact:e0.CR}.
		
		\item \textbf{Step 2: from $e_0$ to $0_X$}. For this case find $n \in \NN$ with $n>1$ and such that $|\mu_1|^{n} < \delta \cdot |\mu_2|^{n-1}$, and let $m_2:=2n-1$. Considering
		\[
		\begin{cases}
			f_0 := e_0, &\\[7.5pt]
			f_1 := T(f_0) - \left( \tfrac{1}{\mu_2} \right)^{n-1} \cdot \ \mu_1^{n} \ \cdot \ e_{(-n,n-1)}, &\\[10pt]
			f_l := T^{l-1}(f_1) & \text{ for } l=2,...,n,...,m_2-1, \\[7.5pt]
			f_{m_2} := 0_X,
		\end{cases}
		\]
		we have that $f_0=e_0$ and $f_{m_2}=0_X$ so we just have to check that $(f_l)_{l=0}^{m_2}$ is a $\delta$-chain. Indeed, since
		\begin{align*}
			f_n = T^{n-1}(f_1) &= T^n(f_0) - T^{n-1}\left( \left( \tfrac{1}{\mu_2} \right)^{n-1} \cdot \ \mu_1^{n} \ \cdot \ e_{(-n,n-1)} \right)\\
			&= \mu_1^{n} \cdot e_{-n} - \mu_2^{n-1} \cdot \left( \tfrac{1}{\mu_2} \right)^{n-1} \cdot \ \mu_1^{n} \cdot \left(e_{(-n,0)} + e_{-n}\right) = - \mu_1^{n} \cdot e_{(-n,0)}
		\end{align*}
		and hence
		\begin{align*}
			T(f_{m_2-1}) = T^{m_2-1}(f_1) &= T^{(m_2-1)-(n-1)}(T^{n-1}(f_1)) = T^{m_2-n}(f_n) = T^{n-1}(f_n)\\
			&= T^{n-1}\left( -\mu_1^n \cdot e_{(-n,0)} \right) = - \left( \tfrac{1}{\mu_2} \right)^{n-1} \cdot \ \mu_1^n \ \cdot \ e_{(-n,-n+1)},
		\end{align*}
		we deduce that $\| f_1 - T(f_0) \| = \| f_{m_2} - T(f_{m_2-1}) \| = \left( \tfrac{1}{|\mu_2|} \right)^{n-1} \cdot \ |\mu_1|^{n} \ < \ \delta$.
	\end{enumerate}
	The arbitrariness of $\delta>0$ together with \textbf{Step~1} and \textbf{Step~2} show that $e_0 \in CR(T)$.
\end{proof}

From this point one can show, using exactly the same arguments included after Fact~\ref{Fact:e0.CR}, that the vectors $e_{-n}$ and $e_n$ belong to $CR(T)$ for every $n \in \NN$. Alternatively, one can use that the equality between the sets of chain recurrent vectors $CR(T)=CR(T^{-1})$ holds since $T$ is invertible, and hence that $CR(T)$ is a closed $T$-and-$T^{-1}$-invariant linear subspace of $X$ (see \cite[Proposition~26]{BernardesPe2024_AM}). In fact,
\[
T^n\left( \tfrac{1}{\mu_1^n} \cdot e_0 \right) = e_{-n} \quad \text{ and } \quad T^{-n}\left(\mu_1^n \cdot e_0\right) = e_n,
\]
so one can directly conclude from the previous comments that $e_{-n}, e_n \in CR(T)$, and hence \eqref{eq:inclusion1T} holds.

\subsection[Calculating CR(T): the second inclusion]{Calculating $CR(T)$: the second inclusion}

We will complete the proof of Theorem~\ref{The:main} by showing that
\begin{equation}\label{eq:inclusion2T}
	CR(T) \subset \cl{\lspan\{ e_n \ ; \ n \in \ZZ \}}.
\end{equation}
Following again the strategy used in Section~\ref{Sec_3:non-invertible} we prove the following fact, which strongly depends on statement (b) of Lemma~\ref{Lem:useful} (this same idea is also crucially used in Theorem~\ref{The:classical} below):

\begin{fact}\label{Fact:non.CR.T}
	Let $f = (f(v))_{v\in V} \in X$. If $f \notin \cl{\lspan\{ e_n \ ; \ n \in \ZZ \}}$, then $f \notin CR(T)$.
\end{fact}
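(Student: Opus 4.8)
The plan is to mimic the structure of Fact~\ref{Fact:non.CR} from Section~\ref{Sec_3:non-invertible}, but now accounting for the fact that the extra branches extend in both directions and that the operator $T$ is invertible. So suppose $f \notin \cl{\lspan\{e_n \ ; \ n \in \ZZ\}}$; then there is some $k \in \NN$ for which the coordinates of $f$ along the $k$-th branch $\{(-k,j) \ ; \ j \in \ZZ\}$ are not all zero. Fix such a $k$. By the invertibility of $T$ and Lemma~\ref{Lem:useful}(b), $f \in CR(T)$ would give, for every $\delta>0$, a $\delta$-chain from $f$ to $0_X$ and a $\delta$-chain from $0_X$ to $f$; I will only need one of these two, choosing according to which of the two series in \eqref{eq:non.CR} converges for this particular $k$. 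Concretely, if $\sum_{j\geq 1}|\lambda_{(-k,1)}\cdots\lambda_{(-k,j)}|<\infty$, I use a $\delta$-chain from $f$ to $0_X$ and track a coordinate $(-k,j_0)$ with $f(-k,j_0)\neq 0$ and $j_0\leq 0$; if instead the second series converges, I use a $\delta$-chain from $0_X$ to $f$ and track a coordinate with $j_0\geq 1$. (If both series converge, either works.)

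The core computation is the coordinate identity coming from Lemma~\ref{Lem:useful}(a): along the $k$-th branch $T$ acts as a one-sided weighted shift with weights $\lambda_{(-k,j)}$, and crucially $T^l$ applied to any vector and then read at coordinate $(-k,j_0)$ only sees coordinates $(-k,j_0),(-k,j_0+1),\dots,(-k,j_0+l)$ (never the $\ZZ$-spine, since the spine feeds \emph{into} the branches, not the other way around — the relevant edges are $(-k,1)\to -k$, oriented so that $e_{-k}$ is not hit by any $T(e_{(-k,j)})$ with $j\neq 1$, and conversely $T^l(e_n)$ for $n\in\ZZ$ never lands on a branch vertex). Using Lemma~\ref{Lem:useful}(a) in the form $0_X = T^m(f) + \sum_{l=1}^m T^{m-l}(g_l)$ with $\|g_l\|<\delta$ (for a chain from $f$ to $0_X$), and reading coordinate $(-k,j_0)$, I get $0 = [T^m(f)](-k,j_0) + \sum_{l=0}^{m-1}[T^l(g_{m-l})](-k,j_0)$. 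The first term equals $(\lambda_{(-k,j_0+1)}\cdots\lambda_{(-k,j_0+m)})\cdot f(-k,j_0+m)$, which tends to $0$ as $m\to\infty$ because $\sup|\lambda|<\infty$ from \eqref{eq:continuity} forces $f(-k,j_0+m)\to 0$ (coordinates of an element of $X$), so actually I want the product bounded — here is where the convergence of $\sum_j|\lambda_{(-k,1)}\cdots\lambda_{(-k,j)}|$ matters: it makes the products $|\lambda_{(-k,j_0+1)}\cdots\lambda_{(-k,j_0+m)}|$ summable in $m$ hence bounded, and in fact it lets me bound the whole tail. Rearranging gives $|f(-k,j_0)|\leq \big(\sum_{l=0}^{m-1}|\lambda_{(-k,j_0+1)}\cdots\lambda_{(-k,j_0+l)}|\big)\cdot\delta \leq C_k\cdot\delta$ where $C_k := \sum_{l\geq 0}|\lambda_{(-k,j_0+1)}\cdots\lambda_{(-k,j_0+l)}|<\infty$, uniformly in $m$; choosing $\delta < |f(-k,j_0)|/C_k$ yields a contradiction.

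The symmetric case, when the second series $\sum_j |\lambda_{(-k,-(j-1))}\cdots\lambda_{(-k,0)}|^{-1}<\infty$ converges, is handled by running the chain from $0_X$ to $f$ and reading a coordinate $(-k,j_0)$ with $j_0\geq 1$; now the relevant identity is $f = T^m(0_X)+\sum_{l=1}^m T^{m-l}(g_l) = \sum_{l=1}^m T^{m-l}(g_l)$, and reading coordinate $(-k,j_0)$ picks up the \emph{inverse} products of the weights below $(-k,j_0)$ — more precisely, the contribution of $g_{m-l}$ at depth $j_0-l$ gets multiplied by $\lambda_{(-k,j_0-l+1)}\cdots\lambda_{(-k,j_0)}$, and since these coordinates now have to come from far down the branch one must be slightly more careful, but the upshot is the same bound $|f(-k,j_0)|\leq C_k'\cdot\delta$ with $C_k'$ finite by the convergence of the inverse-product series. (Alternatively, since $CR(T)=CR(T^{-1})$ and $T^{-1}$ acts on the $k$-th branch as the weighted shift with reciprocal weights in the opposite direction, this second case reduces literally to the first case applied to $T^{-1}$, which is the cleanest way to write it.) I expect the main obstacle to be purely bookkeeping: getting the index ranges in the coordinate identity exactly right — which $l$'s contribute, and over exactly which finite set of branch vertices — and making sure the geometric-type estimate is stated uniformly in the (a priori unbounded) chain length $m$; the one genuinely new ingredient compared to Section~\ref{Sec_3:non-invertible} is that the branches are now two-sided and infinite, so the \emph{finite} telescoping of Fact~\ref{Fact:non.CR} is replaced by a \emph{convergent series} estimate, and that is exactly what hypothesis \eqref{eq:non.CR} was engineered to supply.
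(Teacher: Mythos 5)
Your overall plan (track a nonzero branch coordinate, invoke Lemma~\ref{Lem:useful}, and split according to which series in \eqref{eq:non.CR} converges, replacing the finite telescoping of Fact~\ref{Fact:non.CR} by a convergent-series bound) is the right one, but you have paired the two hypotheses with the wrong halves of Lemma~\ref{Lem:useful}(b), and this breaks both of your cases. Under $T$ mass on the $k$-th branch moves towards decreasing $j$, so for every $g \in X$ one has $[T^{l}(g)](-k,j_0)=\bigl(\lambda_{(-k,j_0+1)}\cdots\lambda_{(-k,j_0+l)}\bigr)\, g(-k,j_0+l)$: reading a \emph{fixed} branch coordinate always picks up the \emph{forward} products. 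Consequently, in your first case the identity $0_X=T^m(f)+\sum_{l=1}^m T^{m-l}(g_l)$ read at $(-k,j_0)$ becomes $0=\bigl(\lambda_{(-k,j_0+1)}\cdots\lambda_{(-k,j_0+m)}\bigr)f(-k,j_0+m)+\sum_{l=0}^{m-1}\bigl(\lambda_{(-k,j_0+1)}\cdots\lambda_{(-k,j_0+l)}\bigr)g_{m-l}(-k,j_0+l)$; the coordinate $f(-k,j_0)$ you want to bound never appears, so the step ``rearranging gives $|f(-k,j_0)|\le C_k\cdot\delta$'' is a non sequitur --- all the identity yields is a bound on the harmless quantity $\bigl|\lambda_{(-k,j_0+1)}\cdots\lambda_{(-k,j_0+m)}\bigr|\cdot|f(-k,j_0+m)|$, which is small anyway. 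In your second case the claimed formula (the contribution of $g_{m-l}$ at depth $j_0-l$ multiplied by $\lambda_{(-k,j_0-l+1)}\cdots\lambda_{(-k,j_0)}$) describes $T^{-1}$, not $T$: the true expansion of $f(-k,j_0)=\sum_{l}[T^{l}(g_{m-l})](-k,j_0)$ again involves the forward products, whose summability is exactly what you are \emph{not} assuming in that case, so no contradiction follows there either.

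The pairing that works (and is the paper's) is the opposite one: if $\sum_{j}|\lambda_{(-k,1)}\cdots\lambda_{(-k,j)}|<\infty$, take the $\delta$-chain from $0_X$ to $f$ and read the fixed coordinate $(-k,j_0)$, obtaining $|f(-k,j_0)|\le\delta\bigl(1+\sum_{l\ge1}|\lambda_{(-k,j_0+1)}\cdots\lambda_{(-k,j_0+l)}|\bigr)$; if $\sum_{j}|\lambda_{(-k,-(j-1))}\cdots\lambda_{(-k,0)}|^{-1}<\infty$, take the $\delta$-chain from $f$ to $0_X$ and read the \emph{moving} coordinate $(-k,j_0-m)$ of $0_X=T^m(f)+\sum_{l=1}^m T^{m-l}(g_l)$, then divide by $\lambda_{(-k,j_0-(m-1))}\cdots\lambda_{(-k,j_0)}$ to get $|f(-k,j_0)|\le\delta\sum_{l\ge1}|\lambda_{(-k,j_0-(l-1))}\cdots\lambda_{(-k,j_0)}|^{-1}$. (Your parenthetical alternative --- apply the first argument to $T^{-1}$ using $CR(T)=CR(T^{-1})$ --- is a legitimate way to present this second case, but only once the first case has been set up with the correct chain direction.) Two smaller points: the restrictions $j_0\le 0$ resp.\ $j_0\ge 1$ on the tracked coordinate are neither needed nor always achievable, since the nonzero branch coordinates of $f$ may all lie on one side of $0$; and the convergence of the series at the shifted index $j_0$ does follow from \eqref{eq:non.CR}, because the two series differ termwise by a fixed nonzero finite product of weights.
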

\begin{proof}
	By assumption there exists some $(k_0,j_0) \in \NN\times\ZZ$ such that $|f(-k_0,j_0)|>0$. Moreover, since the infinite matrix of weights $(\lambda_{(-k,j)})_{(k,j)\in\NN\times\ZZ} \in \KK^{\NN\times\ZZ}$ fulfills \eqref{eq:non.CR}, we have two possibilities:
	\begin{enumerate}[--]
		\item \textbf{Case 1: $\sum_{j=1}^{\infty} \left| \lambda_{(-k_0,1)} \cdots \lambda_{(-k_0,j)} \right| < \infty$}. In this case consider a positive value $\delta>0$ small enough to fulfill the inequality
		\begin{equation}\label{eq:delta1T}
			\delta \cdot \left( 1 + \sum_{j=1}^{\infty} \left| \lambda_{(-k_0,j_0+1)} \cdots \lambda_{(-k_0,j_0+j)} \right| \right) < \left| f(-k_0,j_0) \right|.
		\end{equation}
		If we assume by contradiction that $f \in CR(T)$, then by Lemma~\ref{Lem:useful} there exists a $\delta$-chain $(f_l)_{l=0}^m$ for $T$ from the zero-vector $0_X \in X$ to $f$ and hence a finite sequence $(g_l)_{l=1}^m$ in $X$ such that
		\[
		f = f_m = T^m(0_X) + \sum_{l=1}^m T^{m-l}(g_l) \quad \text{ and } \quad | g_l(v) | < \delta \text{ for all } 1\leq l\leq m \text{ and } v \in V.
		\]
		This implies that
		\begin{align*}
			f(-k_0,j_0) &= \sum_{l=1}^m [T^{m-l}(g_l)](-k_0,j_0) = \sum_{l=0}^{m-1} [T^{l}(g_{m-l})](-k_0,j_0)\\
			&= g_m(-k_0,j_0) + \sum_{l=1}^{m-1} \left( \lambda_{(-k_0,j_0+1)}\cdots\lambda_{(-k_0,j_0+l)} \right) \cdot g_{m-l}(-k_0,j_0+l)
		\end{align*}
		and hence by \eqref{eq:delta1T} we have that
		\[
		|f(-k_0,j_0)| \leq \delta \cdot \left( 1 + \sum_{l=1}^{m-1} \left| \lambda_{(-k_0,j_0+1)}\cdots\lambda_{(-k_0,j_0+l)} \right| \right) < |f(-k_0,j_0)|,
		\]
		contradicting the existence of the $\delta$-chain $(f_l)_{l=0}^m$.
		
		\item \textbf{Case 2: $\sum_{j=1}^{\infty} \left| \lambda_{(-k_0,-(j-1))} \cdots \lambda_{(-k_0,0)} \right|^{-1} < \infty$}. In this case consider $\delta>0$ satisfying the inequality
		\begin{equation}\label{eq:delta2T}
			\delta \cdot \sum_{j=1}^{\infty} \left| \lambda_{(-k_0,j_0-(j-1))}\cdots\lambda_{(-k_0,j_0)} \right|^{-1} < \left| f(-k_0,j_0) \right|.
		\end{equation}
		Assume again by contradiction that $f \in CR(T)$. Then by Lemma~\ref{Lem:useful} there exists a $\delta$-chain $(f_l)_{l=0}^m$ for $T$ from the vector $f$ to the zero-vector $0_X$ and hence a finite sequence $(g_l)_{l=1}^m$ in $X$ such that
		\[
		0_X = f_m = T^m(f) + \sum_{l=1}^m T^{m-l}(g_l) \quad \text{ and } \quad | g_l(v) | < \delta \text{ for all } 1\leq l\leq m \text{ and } v \in V.
		\]
		This implies that
		\begin{align*}
			0 &= f_m(-k_0,j_0-m) = \left( \lambda_{(-k_0,j_0-(m-1))}\cdots\lambda_{(-k_0,j_0)} \right) \cdot f(-k_0,j_0) + \sum_{l=1}^m [T^{m-l}(g_l)](-k_0,j_0-m)\\
			&= \left( \lambda_{(-k_0,j_0-(m-1))}\cdots\lambda_{(-k_0,j_0)} \right) \cdot f(-k_0,j_0) + \sum_{l=1}^m \left( \lambda_{(-k_0,j_0-(m-1))}\cdots\lambda_{(-k_0,j_0-l)} \right) \cdot g_l(-k_0,j_0-l)
		\end{align*}
		and hence by \eqref{eq:delta2T} we have that
		\begin{align*}
			|f(-k_0,j_0)| &\leq \left| \lambda_{(-k_0,j_0-(m-1))}\cdots\lambda_{(-k_0,j_0)} \right|^{-1} \cdot \sum_{l=1}^m \left| \lambda_{(-k_0,j_0-(m-1))}\cdots\lambda_{(-k_0,j_0-l)} \right| \cdot \left| g_l(-k_0,j_0-l) \right| \\
			&\leq \delta \cdot \sum_{l=1}^m \left| \lambda_{(-k_0,j_0-(l-1))}\cdots\lambda_{(-k_0,j_0)} \right|^{-1} < |f(-k_0,j_0)|,
		\end{align*}
		contradicting again the existence of the $\delta$-chain $(f_l)_{l=0}^m$ and finally proving that $f \notin CR(T)$.\qedhere
	\end{enumerate}
\end{proof}

\section{Classical backward shifts are not enough}\label{Sec_5:classical}

In this section we show that Question~\ref{Q:main} cannot be solved via classical (unilateral neither bilateral) weighted backward shifts (acting on $\NN$ and $\ZZ$ respectively). This fact follows from Theorem~\ref{The:classical} below, where we prove that a classical weighted backward shift acting on a Fr\'echet sequence space is a chain recurrent operator whenever it admits a non-zero chain recurrent vector.

We must strongly emphasize that Theorem~\ref{The:classical} is just a modest extension of the already existing and well-known characterizations for the chain recurrent behaviour of classical weighted backward shifts given in \cite[Theorems~20~and~22]{AlBerMess2021} and \cite[Theorems~13,~14,~15~and~16]{BernardesPe2024_AM}. Indeed, our contribution here is to present a very slight refinement of the arguments exhibited in \cite[Theorem~13]{BernardesPe2024_AM}, based on statement (b) of Lemma~\ref{Lem:useful}. As in \cite{BernardesPe2024_AM} we adopt the convention that $c/0 = \infty$ whenever $c \in \ ]0,\infty[$.

\begin{theorem}\label{The:classical}
	Let $X$ be a Fr\'echet sequence space over $V = \NN$ or $\ZZ$ in which the sequence of canonical vectors $(e_n)_{n \in V}$ is a Schauder basis. Assume that $(\|\cdot\|_k)_{k\in\NN}$ is an increasing sequence of seminorms inducing the topology of $X$, and that $\sbf{\lambda}=(\lambda_n)_{n\in V} \in \KK^{V}$ is a sequence of non-zero weights such that the respective (unilateral or bilateral) weighted backward shift
	\[
	B_{\sbf{\lambda}} : (f(n))_{n\in V} \in X \longmapsto (\lambda_{n+1} \cdot f(n+1))_{n\in V} \in X,
	\] 
	is a well-defined operator. Then the following statements are equivalent:
	\begin{enumerate}[{\em(i)}]
		\item $B_{\sbf{\lambda}}$ is a chain recurrent operator (that is, $CR(B_{\sbf{\lambda}})=X$);
		
		\item $B_{\sbf{\lambda}}$ admits a non-zero chain recurrent vector (that is, $CR(B_{\sbf{\lambda}}) \neq \{0_X\}$);
		
		\item for some (and hence for every) integer $n_0 \in V \cup \{0\}$ it holds that
		\begin{equation}\label{eq:+}
			\sum_{n=1}^{\infty} \frac{\left| \lambda_{n_0+1} \cdots \lambda_{n_0+n} \right|}{\left\| e_{n_0+n} \right\|_k} = \infty \quad \text{ when } V=\NN \text{ or } \ZZ,
		\end{equation}
		and
		\begin{equation}\label{eq:-}
			\sum_{n=1}^{\infty} \frac{1}{\left| \lambda_{n_0-(n-1)} \cdots \lambda_{n_0} \right| \cdot \left\| e_{n_0-n} \right\|_k} = \infty \quad \text{ when } V=\ZZ,
		\end{equation}
		for all $k \in \NN$.
	\end{enumerate}
\end{theorem}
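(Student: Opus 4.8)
The plan is to prove the cycle of implications (i) $\Rightarrow$ (ii) $\Rightarrow$ (iii) $\Rightarrow$ (i), where the first implication is trivial since $X \neq \{0_X\}$. The central idea throughout is to exploit statement (b) of Lemma~\ref{Lem:useful}: chain recurrence of a vector $f$ is equivalent to the existence, for every $\delta>0$, of a $\delta$-chain from $0_X$ to $f$ together with one from $f$ to $0_X$. Combined with statement (a) of the same lemma, a $\delta$-chain from $0_X$ to $f$ of length $m$ amounts to writing $f = \sum_{l=1}^m B_{\sbf{\lambda}}^{m-l}(g_l)$ with all $\|g_l\| < \delta$, and similarly a $\delta$-chain from $f$ to $0_X$ means $0_X = B_{\sbf{\lambda}}^m(f) + \sum_{l=1}^m B_{\sbf{\lambda}}^{m-l}(g_l)$ with $\|g_l\|<\delta$. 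Reading these identities coordinate-by-coordinate against the explicit action of $B_{\sbf{\lambda}}^j$ on canonical vectors (namely $B_{\sbf{\lambda}}^j(e_n) = \lambda_{n} \cdots \lambda_{n-j+1}\, e_{n-j}$ after the obvious reindexing) is what links the combinatorics of $\delta$-chains to the series in \eqref{eq:+} and \eqref{eq:-}.

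For (ii) $\Rightarrow$ (iii), I would argue the contrapositive: assume one of the series fails to diverge — say $\sum_n |\lambda_{n_0+1}\cdots\lambda_{n_0+n}| / \|e_{n_0+n}\|_k < \infty$ for some $n_0$ and some $k$ (the $V=\ZZ$ backward series in \eqref{eq:-} is handled symmetrically) — and show $CR(B_{\sbf{\lambda}}) = \{0_X\}$. Given any $f$ with some nonzero coordinate $f(n_0) \neq 0$, pick $\delta$ small enough that $\delta$ times the tail sum of the convergent series is strictly less than $|f(n_0)|$ in the $k$-th seminorm; then a $\delta$-chain from $0_X$ to $f$ (via Lemma~\ref{Lem:useful}(a)) forces $f(n_0) = g_m(n_0) + \sum_{l\geq 1}(\lambda_{n_0+1}\cdots\lambda_{n_0+l})\,g_{m-l}(n_0+l)$, and bounding each $|g_j(\cdot)|$ by $\delta / \|e_{\cdot}\|_k$-type estimates (using that $(e_n)$ is a Schauder basis, so coordinate functionals are continuous and $\|g\|_k \geq \|g(n)e_n\|_k$ after normalizing) yields $|f(n_0)| < |f(n_0)|$, a contradiction. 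This is exactly the mechanism already used in Fact~\ref{Fact:non.CR.T}; the ``some (and hence for every) $n_0$'' clause follows because shifting the base point $n_0$ only multiplies the series by a fixed nonzero constant and the seminorm indices can be compared across a finite shift.

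For (iii) $\Rightarrow$ (i), assume \eqref{eq:+} (and \eqref{eq:-} when $V=\ZZ$) hold for all $k$. Fix an arbitrary $f \in X$ and $\delta>0$; I must produce a $\delta$-chain from $0_X$ to $f$ and one from $f$ to $0_X$. By the Schauder basis property, $f$ is approximated in the metric $d$ by a finite linear combination of the $e_n$, so by a standard density and concatenation argument (using $B_{\sbf{\lambda}}(0_X)=0_X$) it suffices to treat $f = e_{n_0}$ for a single $n_0$. To build a $\delta$-chain from $0_X$ to $e_{n_0}$: using the divergence of $\sum_n |\lambda_{n_0+1}\cdots\lambda_{n_0+n}|/\|e_{n_0+n}\|_k$, for each $k$ one can choose an index $N$ with $|\lambda_{n_0+1}\cdots\lambda_{n_0+N}| / \|e_{n_0+N}\|_k$ large, hence the single vector $g = \big(\lambda_{n_0+1}\cdots\lambda_{n_0+N}\big)^{-1} e_{n_0+N}$ has small norm while $B_{\sbf{\lambda}}^N(g) = e_{n_0}$; iterating $B_{\sbf{\lambda}}$ along the way gives the chain. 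For the $\delta$-chain from $e_{n_0}$ to $0_X$ when $V=\ZZ$ one symmetrically uses \eqref{eq:-} to ``kill'' $B_{\sbf{\lambda}}^m(e_{n_0}) = \lambda_{n_0}\cdots\lambda_{n_0-m+1} e_{n_0-m}$ by subtracting a small-norm multiple of $e_{n_0-m}$; when $V=\NN$ the orbit of $e_{n_0}$ under $B_{\sbf{\lambda}}$ reaches $0_X$ after finitely many steps automatically, so only \eqref{eq:+} is needed. The one technical subtlety is that the F-norm $\|\cdot\|$ of \eqref{eq:F-norm} mixes all seminorms $\|\cdot\|_k$, so ``small in $\|\cdot\|$'' must be unpacked as ``small in $\|\cdot\|_k$ for each $k$ up to a cutoff, and automatically controlled beyond it''; this is routine but is where one genuinely uses that \eqref{eq:+}–\eqref{eq:-} are assumed for \emph{every} $k$, not just one.

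The main obstacle I anticipate is the bookkeeping in (iii) $\Rightarrow$ (i): one must assemble a \emph{single} $\delta$-chain that works for the full F-norm, which means simultaneously handling infinitely many seminorms, and one must confirm that the density reduction to $f = e_{n_0}$ is legitimate — concatenating a chain from $0_X$ to (a finite approximant of) $f$ with error vectors absorbing the approximation defect, while keeping every step within $\delta$. None of this is deep, but it requires care to state cleanly; the hard conceptual content is entirely contained in the Lemma~\ref{Lem:useful}(b) reformulation, after which both directions become finite-dimensional linear algebra along a single branch of the shift.
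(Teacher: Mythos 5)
Your implication (ii) $\Rightarrow$ (iii) is, modulo presentation, the paper's actual new contribution run in contrapositive form: the paper fixes a non-zero chain recurrent vector $f^*$, fixes $\delta$ once and for all, and lets the chain target $r f^*$ grow with $r\to\infty$ to force divergence, whereas you fix $f$ and shrink $\delta$; the coordinate computation is the same. Two caveats, though. First, the uniform estimate you invoke, ``$\|g\|_k \geq \|g(n)e_n\|_k$ after normalizing'', is false in general for the \emph{given} seminorms of a Fr\'echet sequence space with Schauder basis, and \eqref{eq:+}--\eqref{eq:-} are stated for those given seminorms; what is actually needed (and what the paper uses) is equicontinuity, via Banach--Steinhaus, of the family of coordinate projections $g\mapsto g(n)e_n$, $n\in V$ (pointwise bounded precisely because $(e_n)$ is a Schauder basis), which produces one $\delta>0$ with $\|g\|<\delta \Rightarrow \|g(n)e_n\|_k<1$ for all $n$ simultaneously. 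Second, since your contrapositive must show $CR(B_{\sbf{\lambda}})=\{0_X\}$, you must kill \emph{every} coordinate of a putative chain recurrent vector, so the propagation of convergence of the series across base points (with a possibly larger seminorm index) is an essential step, not just a remark about the ``some/every'' clause. Both points are repairable.

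The genuine gap is in your (iii) $\Rightarrow$ (i). You claim that divergence of $\sum_n |\lambda_{n_0+1}\cdots\lambda_{n_0+n}|/\|e_{n_0+n}\|_k$ lets you pick a single index $N$ with that term large, so that the one perturbation $g=(\lambda_{n_0+1}\cdots\lambda_{n_0+N})^{-1}e_{n_0+N}$ has small norm and reaches $e_{n_0}$ after $N$ iterations. Divergence of a positive series does not yield large terms: for the unweighted shift on $\ell^1(\NN)$ (all $\lambda_n=1$, $\|e_n\|=1$) every term equals $1$, so your $g$ has norm exactly $1$ for every $N$, yet the series diverges and the shift is chain recurrent --- the correct chain from $0_X$ to $e_{n_0}$ must spread the mass over many steps (e.g.\ $g_l=\tfrac1m e_{n_0+m-l}$, $l=1,\dots,m$), and it is exactly the divergence of the series, not the size of any single term, that guarantees the accumulated contribution can reach $1$ while each step stays below $\delta$. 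The same objection applies to your one-step ``kill'' of $B_{\sbf{\lambda}}^m(e_{n_0})$ via \eqref{eq:-} in the bilateral case. A correct multi-step construction, organized through the F-norm \eqref{eq:F-norm} and all seminorms as you anticipate, is precisely \cite[Theorems~14~and~16]{BerPe2023_arXiv}, which the paper cites for (i) $\Leftrightarrow$ (iii) rather than reproving; so either import that result as the paper does, or rebuild the chains with distributed perturbations (the reduction to $f=e_{n_0}$ is fine, and is even easier than your concatenation argument since $CR(B_{\sbf{\lambda}})$ is a closed subspace).
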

The equivalence (i) $\Leftrightarrow$ (iii) was shown in \cite[Section~4]{BernardesPe2024_AM} for the particular case $n_0=0$, but let us briefly justify the ``{\em and hence for every}'' part of statement (iii): given a pair of integers $n_1,n_2 \in V\cup\{0\}$ with $n_1<n_2$ and such that \eqref{eq:+} holds for all $k \in \NN$ and $n_0=n_1$, then we can find some $k_0 \in \NN$ such that $\|e_{n_1+l}\|_k\neq 0$ for all $k\geq k_0$ and $1\leq l\leq n_2-n_1$, which implies \eqref{eq:+} for all $k\geq k_0$ and $n_0=n_2$ because the divergent behaviour of a series remains invariant when a finite quantity of non-infinite terms are removed, but also when all its terms are multiplied by the same non-zero number (in this case we would divide every term by $|\lambda_{n_1+1}\cdots\lambda_{n_2}|$). Thus, since the sequence of seminorms $(\|\cdot\|_k)_{k\in\NN}$ is increasing, it follows that \eqref{eq:+} holds for all $k \in \NN$ and $n_0=n_2$. A completely symmetrical argument shows that, if \eqref{eq:-} holds for all $k \in \NN$ and $n_0=n_2$, then \eqref{eq:-} holds for all $k \in \NN$ and $n_0=n_1$.
\begin{proof}[Proof of Theorem~\ref{The:classical}]
	The equivalence (i) $\Leftrightarrow$ (iii) follows from \cite[Theorems~14~and~16]{BernardesPe2024_AM} together with the previous comments and, since (i) $\Rightarrow$ (ii) is trivial, we just have to show (ii) $\Rightarrow$ (iii). Thus, assume that statement (ii) holds and let $f^* = (f^*(n))_{n\in V} \in X$ be a non-zero chain recurrent vector for the (unilateral or bilateral) weighted shift $B_{\sbf{\lambda}}$, so that there exists some integer $n_0 \in V\cup\{0\}$ fulfilling that $f^*(n_0+1)\neq 0$. Since $CR(B_{\sbf{\lambda}})$ is a linear subspace of $X$, we will assume that $f^*(n_0+1)=\frac{1}{\lambda_{n_0+1}}\cdot$
	
	From now on we slightly modify the proof of \cite[Theorem~13]{BernardesPe2024_AM} to obtain (iii) for $n_0$:
	
	We start by showing that \eqref{eq:+} holds in any of the cases $V = \NN$ or $\ZZ$ at the same time. Fix any positive integer $k \in \NN$ and note that we may assume $\|e_{n_0+n}\|_k \neq 0$ for all $n \in \NN$, since otherwise the desired equality would hold trivially. By the Banach-Steinhaus theorem, there exists $\delta>0$ such that
	\begin{equation}\label{eq:BS.theorem}
		g=(g(n))_{n\in V} \in X \text{ with } \|g\| < \delta \quad \Longrightarrow \quad \left\|g(n) \cdot e_n\right\|_k < 1 \text{ for all } n \in V,
	\end{equation}
	where $\|\cdot\|$ is the F-norm described in \eqref{eq:F-norm}. Given any $r>0$, by Lemma~\ref{Lem:useful} we can find a $\delta$-chain $(f_l)_{l=0}^m$ for $B_{\sbf{\lambda}}$ from $0_X$ to $r f^* = (r f^*(n))_{n\in V}$ and hence a finite sequence $(g_l)_{l=1}^m$ in $X$ such that
	\begin{equation}\label{eq:rf*}
		f_m = B_{\sbf{\lambda}}^m(f_0) + \sum_{l=1}^m B_{\sbf{\lambda}}^{m-l}(g_l) \quad \text{ and } \quad \left|g_{m-(l-1)}(n_0+l)\right| < \frac{1}{\left\|e_{n_0+l}\right\|_k} \text{ for every } 1\leq l\leq m,
	\end{equation}
	where these last inequalities come from \eqref{eq:BS.theorem}. Since $f_0=0_X$, $f_m=rf^*$ and $f^*(n_0+1)=\frac{1}{\lambda_{n_0+1}}$,
	\begin{align*}
		r = \lambda_{n_0+1} \cdot f_m(n_0+1) &= \lambda_{n_0+1} \cdot \left( 0 +  \sum_{l=1}^m [B_{\sbf{\lambda}}^{m-l}(g_l)](n_0+1) \right) = \lambda_{n_0+1} \cdot \sum_{l=1}^m [B_{\sbf{\lambda}}^{l-1}(g_{m-(l-1)})](n_0+1)\\
		&= \sum_{l=1}^m \left( \lambda_{n_0+1} \cdots \lambda_{n_0+l} \right) \cdot g_{m-(l-1)}(n_0+l),
	\end{align*}
	and the triangle inequality together with the inequalities from \eqref{eq:rf*} yield
	\[
	r \leq \sum_{l=1}^{m} \left|\lambda_{n_0+1} \cdots \lambda_{n_0+l}\right| \cdot \left|g_{m-(l-1)}(n_0+l)\right| < \sum_{l=1}^{m} \frac{\left| \lambda_{n_0+1} \cdots \lambda_{n_0+l} \right|}{\left\| e_{n_0+l} \right\|_k}.
	\]
	Since $r>0$ is arbitrary, \eqref{eq:+} holds in any of the cases $V=\NN$ or $\ZZ$.
	
	It remains to show \eqref{eq:-} when $V=\ZZ$. In this case we can fix $k \in \NN$ and assume that $\|e_{n_0-n}\|_k\neq 0$ for all $n \in \NN$, and we can choose again some $\delta>0$ such that \eqref{eq:BS.theorem} is fulfilled. Since the set $CR(B_{\sbf{\lambda}})$ is $B_{\sbf{\lambda}}$-invariant, given any $r>0$ and again by Lemma~\ref{Lem:useful} we can find a $\delta$-chain $(f_l)_{l=0}^m$ for $B_{\sbf{\lambda}}$ from the vector $r B_{\sbf{\lambda}}(f^*)$ to $0_X$ and hence a finite sequence $(g_l)_{l=1}^m$ in $X$ such that
	\begin{equation}\label{eq:rBf*}
		f_m = B_{\sbf{\lambda}}^m(f_0) + \sum_{l=1}^m B_{\sbf{\lambda}}^{m-l}(g_l) \quad \text{ and } \quad \left|g_l(n_0-l)\right| < \frac{1}{\left\|e_{n_0-l}\right\|_k} \text{ for every } 1\leq l\leq m,
	\end{equation}
	where these last inequalities come from \eqref{eq:BS.theorem}. Since $f_m=0_X$, $f_0=rB_{\sbf{\lambda}}(f^*)$ and $f^*(n_0+1)=\frac{1}{\lambda_{n_0+1}}$,
	\begin{align*}
		0 = f_m(n_0-m) &= r \cdot [B_{\sbf{\lambda}}^{m+1}(f^*)](n_0-m) + \sum_{l=1}^m [B_{\sbf{\lambda}}^{m-l}(g_l)](n_0-m)\\
		&= r \cdot \left( \lambda_{n_0-(m-1)} \cdots \lambda_{n_0} \right) + \sum_{l=1}^m \left( \lambda_{n_0-(m-1)} \cdots \lambda_{n_0-l} \right) \cdot g_l(n_0-l),
	\end{align*}
	and the triangle inequality together with the inequalities from \eqref{eq:rBf*} yield
	\[
	r \leq \frac{1}{\left| \lambda_{n_0-(m-1)} \cdots \lambda_{n_0} \right|} \cdot \sum_{l=1}^{m} \left| \lambda_{n_0-(m-1)} \cdots \lambda_{n_0-l} \right| \cdot \left| g_l(n_0-l) \right| < \sum_{l=1}^{m} \frac{1}{\left| \lambda_{n_0-(l-1)} \cdots \lambda_{n_0} \right| \cdot \left\| e_{n_0-l} \right\|_k}.
	\]
	Since $r>0$ is again arbitrary, \eqref{eq:-} follows for the case $V=\ZZ$, and the proof is complete.
\end{proof}

As we were advancing at the beginning of this section, Theorem~\ref{The:classical} shows that Question~\ref{Q:main} cannot be solved via classical weighted backward shifts. Indeed, if we consider a unilateral or bilateral weighted shift $B_{\sbf{\lambda}}:X\longrightarrow X$ as described in the statement of Theorem~\ref{The:classical}, then the restriction of $B_{\sbf{\lambda}}$ to $CR(B_{\sbf{\lambda}})$ can be either the trivial operator $B_{\sbf{\lambda}}\res_{\{0_X\}}:\{0_X\}\longrightarrow\{0_X\}$ or else the operator $B_{\sbf{\lambda}}$ itself, and in both cases $B_{\sbf{\lambda}}:CR(B_{\sbf{\lambda}})\longrightarrow CR(B_{\sbf{\lambda}})$ is again a chain recurrent operator.

Note also that, even if we consider classical shifts with some of the weights equal to $0$, this is not enough to solve Question~\ref{Q:main} above. Indeed, if under the assumptions of Theorem~\ref{The:classical} we admit the equality $\lambda_{n_0}=0$ for some index $n_0 \in V$, then one can show (similarly to Fact~\ref{Fact:non.CR}) that both
\[
Y_{n_0}^- := \cl{\lspan\{ e_n \ ; \ n<n_0 \}} \quad \text{ and } \quad Y_{n_0}^+ := \cl{\lspan\{ e_n \ ; \ n\geq n_0 \}}
\]
are closed $B_{\sbf{\lambda}}$-invariant subspaces of $X$ for which $CR(B_{\sbf{\lambda}}) \subset Y_{n_0}^+$. Using this last fact we have that:
\begin{enumerate}[--]
	\item if the set of indices $J := \{ n \in V \ ; \ \lambda_n=0 \}$ is not bounded from above, then $CR(B_{\sbf{\lambda}}) = \{0_X\}$;
	
	\item while if $J$ is bounded from above and $n_0 := \max(J)$, then Theorem~\ref{The:classical} shows that $CR(B_{\sbf{\lambda}})\neq \{0_X\}$, and in particular $CR(B_{\sbf{\lambda}})=Y_{n_0}^+$, if and only if
	\[
	\sum_{n=1}^{\infty} \frac{\left| \lambda_{n_0+1} \cdots \lambda_{n_0+n} \right|}{\left\| e_{n_0+n} \right\|_k} = \infty \quad \text{ for all } k \in \NN.
	\]
\end{enumerate} 
The restricted operator $B_{\sbf{\lambda}}\res_{CR(B_{\sbf{\lambda}})}:CR(B_{\sbf{\lambda}})\longrightarrow CR(B_{\sbf{\lambda}})$ would be chain recurrent in any case, and we have implicitly proved the following:

\begin{corollary}\label{Cor:classical}
	Let $X$ be a Fr\'echet sequence space over $V = \NN$ or $\ZZ$ in which the sequence of canonical vectors $(e_n)_{n \in V}$ is a Schauder basis. Assume that $(\|\cdot\|_k)_{k\in\NN}$ is an increasing sequence of seminorms inducing the topology of $X$, and that $\sbf{\lambda}=(\lambda_n)_{n\in V} \in \KK^{V}$ is a sequence of (possibly zero) weights such that the respective (unilateral or bilateral) weighted backward shift
	\[
	B_{\sbf{\lambda}} : (f(n))_{n\in V} \in X \longmapsto (\lambda_{n+1} \cdot f(n+1))_{n\in V} \in X,
	\] 
	is a well-defined operator. Then the set of chain recurrent vectors $CR(B_{\sbf{\lambda}}\res_{CR(B_{\sbf{\lambda}})})$ for the restricted operator $B_{\sbf{\lambda}}\res_{CR(B_{\sbf{\lambda}})}$ coincides with the set $CR(B_{\sbf{\lambda}})$, and hence $B_{\sbf{\lambda}}\res_{CR(B_{\sbf{\lambda}})}$ is chain recurrent.
\end{corollary}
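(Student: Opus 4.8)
The plan is to deduce the corollary directly from the trichotomy for $CR(B_{\sbf{\lambda}})$ already isolated in the discussion preceding the statement, together with Theorem~\ref{The:classical}. Writing $J := \{ n \in V \ ; \ \lambda_n = 0 \}$, the first step is simply to assemble, from that discussion, the fact that exactly one of the following occurs: (a) $CR(B_{\sbf{\lambda}}) = X$, which happens precisely when $J=\emptyset$ and condition (iii) of Theorem~\ref{The:classical} holds; (b) $CR(B_{\sbf{\lambda}}) = \{0_X\}$, which happens when $J=\emptyset$ and (iii) fails, or when $J\neq\emptyset$ is unbounded above, or when $J\neq\emptyset$ is bounded above with $n_0:=\max(J)$ but the series $\sum_{n\geq 1} |\lambda_{n_0+1}\cdots\lambda_{n_0+n}|/\|e_{n_0+n}\|_k$ fails to diverge for some $k\in\NN$ (the last sub-case being handled by a Fact~\ref{Fact:non.CR}-type argument together with $CR(B_{\sbf{\lambda}})\subset Y_{n_0}^+$); or (c) $CR(B_{\sbf{\lambda}}) = Y_{n_0}^+$, which happens when $J\neq\emptyset$ is bounded above with $n_0:=\max(J)$ and that series diverges for every $k\in\NN$.

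Next I would verify chain recurrence of $B_{\sbf{\lambda}}\res_{CR(B_{\sbf{\lambda}})}$ in each of the three cases. In case (b) the restriction is the zero operator on $\{0_X\}$, which is vacuously chain recurrent; in case (a) the restriction equals $B_{\sbf{\lambda}}$, which is chain recurrent by the hypothesis defining that case. Case (c) carries the only real content: here one checks that $Y_{n_0}^+$ is a closed $B_{\sbf{\lambda}}$-invariant subspace, that $Y_{n_0}^+$ with the induced topology and the canonical vectors $(e_n)_{n\geq n_0}$ is again a Fr\'echet sequence space over $\{n_0,n_0+1,\dots\}\cong\NN$ in which these vectors form a Schauder basis, and that $B_{\sbf{\lambda}}\res_{Y_{n_0}^+}$ is exactly the unilateral weighted backward shift with weight sequence $(\lambda_{n_0+1},\lambda_{n_0+2},\dots)$, every term of which is non-zero because $n_0=\max(J)$. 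For this shift, condition (iii) of Theorem~\ref{The:classical} is precisely the divergence of $\sum_{n\geq 1} |\lambda_{n_0+1}\cdots\lambda_{n_0+n}|/\|e_{n_0+n}\|_k$ for all $k$, i.e. the defining hypothesis of case (c); hence Theorem~\ref{The:classical} gives that $B_{\sbf{\lambda}}\res_{Y_{n_0}^+}=B_{\sbf{\lambda}}\res_{CR(B_{\sbf{\lambda}})}$ is chain recurrent.

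Since in all three cases $B_{\sbf{\lambda}}\res_{CR(B_{\sbf{\lambda}})}$ is a chain recurrent operator, its set of chain recurrent vectors equals its whole underlying space $CR(B_{\sbf{\lambda}})$, which is the assertion of the corollary. I expect the only mildly delicate point to be the bookkeeping in case (c): one must make sure that passing to $Y_{n_0}^+$ genuinely stays inside the hypotheses of Theorem~\ref{The:classical} — that the restricted seminorms $\|\cdot\|_k\res_{Y_{n_0}^+}$ still form an increasing family inducing the subspace topology, and that the relabelling of indices does not affect the divergence of the relevant series (which is immune both to deleting finitely many terms and to dividing all terms by the fixed non-zero constant $|\lambda_{n_0+1}\cdots\lambda_{n_0+j}|$, exactly as in the ``and hence for every'' remark following Theorem~\ref{The:classical}). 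A second minor point, needed to pin down that $CR(B_{\sbf{\lambda}})=\{0_X\}$ in the last sub-case of (b), is that $CR(B_{\sbf{\lambda}})\cap Y_{n_0}^+$ coincides with $CR(B_{\sbf{\lambda}}\res_{Y_{n_0}^+})$; this can be obtained either by an intertwining argument with the coordinate projection onto $Y_{n_0}^+$ (which commutes with $B_{\sbf{\lambda}}$ thanks to $\lambda_{n_0}=0$) or by simply re-running the computation in the proof of Theorem~\ref{The:classical} with a marked coordinate $n\geq n_0$ in place of $n_0+1$.
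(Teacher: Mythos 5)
Your proposal is correct and takes essentially the same route as the paper, which deduces the corollary ``implicitly'' from exactly the trichotomy you assemble --- $CR(B_{\sbf{\lambda}})$ equal to $\{0_X\}$, to $X$, or to $Y_{n_0}^+$ with $n_0=\max\{n : \lambda_n=0\}$ --- combined with $CR(B_{\sbf{\lambda}})\subset Y_{n_0}^+$ (a Fact~\ref{Fact:non.CR}-type argument) and an application of Theorem~\ref{The:classical} to the unilateral shift induced on $Y_{n_0}^+$. The bookkeeping you flag (the basis and seminorms on $Y_{n_0}^+$, relabelling of the divergent series, and the marked-coordinate argument for the remaining sub-case) is precisely what the paper leaves implicit, and it goes through as you describe.
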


We would like to emphasize the possible interest of Theorem~\ref{The:classical} from a theoretical point of view by comparing the notion of chain recurrence with that of hypercyclicity. Indeed, in the works \cite{Seceleanu2010} and \cite{ChanSe2012}, Chan and Seceleanu showed the following: {\em if a classical weighted backward shift on $\ell^p$, $1\leq p<\infty$, admits an orbit that has a non-zero limit point, then the weighted shift is hypercyclic}. This result was called the ``{\em zero-one law of orbital limit points}'', which applies to both unilateral and bilateral weighted shifts. Later, in the 2020 work \cite{BoGr2020}, this result was extended by Bonilla and Grosse-Erdmann via a new and simpler proof, which also showed that this remains true for every classical unilateral and bilateral shift acting on any Fr\'echet sequence space with an unconditional basis. In our chain recurrence context, Theorem~\ref{The:classical} may be considered as a kind of {\em zero-one law} for classical shifts and, as suggested by the reviewer of this paper, we would like to pose the following natural question:

\begin{problem}\label{P:zero-one.law}
	Is there a general class of operators (possibly more general than classical shifts) such that any operator $T \in \Lc(X)$ in this class is chain recurrent as soon as $CR(T)\neq\{0_X\}$?
\end{problem}

We finish this paper by addressing Problem~\ref{P:zero-one.law} for the case of finite-dimensional operators, that is, for real and complex matrices. In particular, we are about to show that a Jordan block, coming from the Jordan normal form of a matrix, is chain recurrent if and only if it accepts a non-zero chain recurrent vector (see Lemma~\ref{Lem:finite-dimension}). We know that the results below are known to some experts but, since we could not find a reference, we will present them here for the sake of completeness.

\begin{lemma}\label{Lem:finite-dimension}
	Let $d \in \NN$ and assume that $T \in \CC^{d\times d}$ is either a diagonal matrix or a Jordan block,
	\[
	T =
	\begin{pmatrix}
		\lambda & 0 & 0 & \cdots & 0 \\
		0 & \lambda & 0 & \cdots & 0 \\
		\vdots & \vdots & \ddots & \ddots & \vdots \\
		0 & 0 & 0 & \lambda & 0 \\
		0 & 0 & 0 & 0 & \lambda \\
	\end{pmatrix} 
	\quad \text{ or } \quad T =
	\begin{pmatrix}
		\lambda & 1 & 0 & \cdots & 0 \\
		0 & \lambda & 1 & \cdots & 0 \\
		\vdots & \vdots & \ddots & \ddots & \vdots \\
		0 & 0 & 0 & \lambda & 1 \\
		0 & 0 & 0 & 0 & \lambda \\
	\end{pmatrix}
	,
	\]
	for some $\lambda \in \CC$. Then, for the finite-dimensional operator $T:\CC^d\longrightarrow\CC^d$ induced by the matrix $T$, the following statements are equivalent:
	\begin{enumerate}[{\em(i)}]
		\item $T$ is a chain recurrent operator (that is, $CR(T)=\CC^d$);
		
		\item $T$ admits a non-zero chain recurrent vector (that is, $CR(T) \neq \{0_{\CC^d}\}$);
		
		\item the complex value $\lambda \in \CC^n$ is unimodular (that is, $|\lambda|=1$).
	\end{enumerate}
\end{lemma}
\begin{proof}
	The implication (i) $\Rightarrow$ (ii) is trivial. Moreover, to check (ii) $\Rightarrow$ (iii) note that if $|\lambda|\neq 1$ we then have that $\sigma_p(T)=\sigma(T)=\{\lambda\}$ does not intersect the unit circle $\{z \in \CC \ ; \ |z|=1 \}$ of $\CC$, so that $T$ is a hyperbolic operator (see \cite[Definition~4]{BerCiDaMessPu2018}) and hence $CR(T)=\{0_{\CC^d}\}$ by \cite[Corollary~2.11]{AntMantVar2022}. Finally, assume that statement (iii) holds, so that $|\lambda|=1$, and let us check (i). We will use Lemma~\ref{Lem:useful}: for a fixed vector $f = (f(1),f(2),...,f(d)) \in \CC^d$ and a fixed $\delta>0$, we will show the existence of a $\delta$-chain from $f$ to $0_{\CC^d}$, and then the existence of a $\delta$-chain from $0_{\CC^d}$ to $f$.
	
	First, since the last coordinate of $T(f)$ is $\lambda f(d)$ and $|\lambda|=1$, it is clear that making $\delta$-perturbations only in the last coordinate one can obtain a $\delta$-chain $(f_l)_{l=0}^{m_1}$ for $T$ from $f_0=f$ to a vector $f_{m_1}$ with last coordinate equal to zero. Since the last two coordinates of $T(f_{m_1})$ are $\lambda f_{m_1}(d-1)$ and $0$, respectively, making $\delta$-perturbations only in the coordinate $d-1$ (which keeps the last coordinate equal to zero), one can also obtain a $\delta$-chain $(f_l)_{l=m_1}^{m_2}$ for $T$ from $f_{m_1}$ to a vector $f_{m_2}$ with the last two coordinates equal to zero. Continuing this process one ends up with a $\delta$-chain $(f_l)_{l=0}^{m_d}$ for $T$ from $f_0=f$ to $f_{m_d}=0_{\CC^d}$.
	
	We also claim the existence of a $\delta$-chain for $T$ from $0_{\CC^d}$ to $f$. Indeed, the inverse operator $T^{-1}$ is an upper-triangular matrix with the value $\lambda^{-1}$ along its diagonal and $|\lambda^{-1}|=1$, so that the same arguments used above show the existence of a $\delta$-chain $(f_l)_{l=0}^{m}$ for $T^{-1}$ from $f$ to $0_{\CC^d}$. Looking at $(f_l)_{l=0}^{m}$ in reversed from $f_m=0_{\CC^d}$ to $f_0=f$, and since $T(T^{-1}(g))=g$ for all $g \in \CC^d$, the claim follows.
\end{proof}

\begin{theorem}\label{The:finite-dimension}
	Let $d \in \NN$ and consider any (real or complex) matrix $T \in \KK^{d\times d}$. Then, for the finite-dimensional operator $T:\KK^d\longrightarrow\KK^d$ induced by the matrix $T$, the following statements hold:
	\begin{enumerate}[{\em(a)}]
		\item $T$ is chain recurrent if and only if the roots over $\CC$ of its characteristic polynomial are unimodular.
		
		\item The restriction $T\res_{CR(T)}:CR(T)\longrightarrow CR(T)$ is chain recurrent, that is, $CR(T\res_{CR(T)})=CR(T)$.
	\end{enumerate}
\end{theorem}
\begin{proof}
	Assume first that $\KK=\CC$, so that $T$ is seen as complex-valued matrix. In this case we can assume that $T$ is given in its Jordan normal form, so that $\CC^d$ can be decomposed as a topological direct sum of finitely many $T$-invariant subspaces for which $T$ acts as the blocks described in Lemma~\ref{Lem:finite-dimension}. Both statements (a) and (b) follow now from Lemma~\ref{Lem:finite-dimension} and \cite[Proposition~34]{BernardesPe2024_AM}.
	
	When $\KK=\RR$, and $T$ is a real-valued matrix acting on $\RR^d$, we have two possibilities: one can use the real Jordan normal form, but then a real-version of Lemma~\ref{Lem:finite-dimension} has to be proved; or we can consider the complexification of $T$, which means studying $T$ as a real-valued matrix acting on $\CC^d$. For the second option we can apply the complex version of this result, and using \cite[Proposition~34]{BernardesPe2024_AM} we can conclude the proof by projecting the set of complex chain recurrent vectors into their real coordinate.
\end{proof}

The reader may note that Theorem~\ref{The:finite-dimension} shows, in particular, that Question~\ref{Q:main} can not be solved via finite-dimensional operators. Moreover, and as a last comment, we will compare Theorem~\ref{The:finite-dimension} above with \cite[Theorems~4.1 and 4.2]{CoMaPa2014} in which the ``just {\em recurrent}'' finite-dimensional operators were characterized. We focus on complex matrices for simplicity: given $T \in \CC^{d\times d}$ we have that
\begin{enumerate}[--]
	\item $T$ is {\em chain recurrent} if and only if $\sigma_p(T) = \sigma(T) \subset \{ z \in \CC \ ; \ |z|=1 \}$;
	
	\item and that $T$ is {\em recurrent} if and only if $\sigma_p(T) = \sigma(T) \subset \{ z \in \CC \ ; \ |z|=1 \}$ and $T$ is diagonalizable.
\end{enumerate}
Hence, even for finite-dimensional operators, the notion of {\em chain recurrence} is strictly weaker than that of {\em linear recurrence} as defined in the 2014 work \cite{CoMaPa2014}.

\section*{Funding}

The first author was supported by the Spanish Ministerio de Ciencia, Innovaci\'on y Universidades (grant number FPU2019/04094); also by MCIN/AEI/10.13039/501100011033/FEDER, UE Projects PID2019-105011GB-I00 and PID2022-139449NB-I00; and by the Fundaci\'o Ferran Sunyer i Balaguer.

\section*{Acknowledgments}

The authors would like to thank Emma D'Aniello and Martina Maiuriello for their warm hospitality during the \textit{45th Summer Symposium in Real Analysis} held in Caserta, June 2023, where this paper was initiated. Special appreciation is extended to Nilson C.\ Bernardes Jr.\ and Alfred Peris for engaging and insightful discussions on this topic, during and after this same conference. Finally, the authors would also like to thank the anonymous reviewer, whose careful reading and valuable observations have significantly improved this paper.

{\footnotesize

}

{\footnotesize $\ $\\

\textsc{Antoni L\'opez-Mart\'inez}: Universitat Polit\`ecnica de Val\`encia, Institut Universitari de Matem\`atica Pura i Aplicada, Edifici 8E, 4a planta, 46022 Val\`encia, Spain. e-mail: alopezmartinez@mat.upv.es\\

\textsc{Dimitris Papathanasiou}: Sabanci University, Orta Mahalle, \"Universitesi Cd. No: 27, 34956 Tuzla/Istanbul, Turkey. e-mail: d.papathanasiou@sabanciuniv.edu

}


\begin{thebibliography}{X}
\addcontentsline{toc}{section}{References}
	
	\bibitem{AlBerMess2021} F. F. Alves, N. C. Bernardes Jr., and A. Messaoudi. Chain recurrence and average shadowing in dynamics. \textit{Monatsh. Math.}, \textbf{196} (4) (2021), 665--697.

	\bibitem{AntMantVar2022} M. B. Antunes, G. E. Mantovani, and R. Var\~{a}o. Chain recurrence and positive shadowing in linear dynamics. \textit{J. Math. Anal. Appl.}, \textbf{506} (1) (2022), 125622.
	
	\bibitem{AHi1994_book} N. Aoki and K. Hiraide. \textit{Topological Theory of Dynamical Systems: Recent Advances}. North-Holland, 1994.

	\bibitem{BaranovLiPa2023_arXiv} A. Baranov, A. Lishanskii, and D. Papathanasiou. Hypercyclic shifts on lattice graphs. Preprint (2023), arXiv:2312.13934v2.

	\bibitem{BaMa2009_book} F. Bayart and \'E. Matheron. \textit{Dynamics of linear operators}. Cambridge University Press, 2009.

	\bibitem{BerCiDaMessPu2018} N. C. Bernardes Jr., P. R. Cirilo, U. B. Darji, A. Messaoudi, and E. R. Pujals. Expansivity and shadowing in linear dynamics, \textit{J. Math. Anal. Appl.}, \textbf{461} (1) (2018), 796--816.
	
	\bibitem{BerMe2020} N. C. Bernardes Jr.\ and A. Messaoudi. A generalized Grobman-Hartman theorem. \textit{Proc. Amer. Math. Soc.}, \textbf{148} (10) (2020), 4351--4360.
	
	\bibitem{BerMe2021} N. C. Bernardes Jr.\ and A. Messaoudi. Shadowing and Structural Stability for Operators. \textit{Ergodic Theory Dynam. Systems}, \textbf{41} (4) (2021), 961--980.
	
	\bibitem{BernardesPe2024_AM} N. C. Bernardes Jr.\ and A. Peris. On Shadowing and Chain Recurrence in Linear Dynamics. \textit{Adv. Math.}, \textbf{441} (2024), 109539.

	\bibitem{BoGr2020} A. Bonilla and K.-G. Grosse-Erdmann. Zero-one law of orbital limit points for weighted shifts. Preprint (2020), arXiv:2007.01641v1.
	
	\bibitem{BoGrLoPe2022} A. Bonilla, K.-G. Grosse-Erdmann, A. L\'opez-Mart\'inez, and A. Peris. Frequently recurrent operators. \textit{J. Funct. Anal.}, \textbf{283} (12) (2022), 109713.
	
	\bibitem{Bowen1975} R. Bowen. $\omega$-Limit Sets for Axiom A Diffeomorphisms. \textit{J. Differ. Equ.}, \textbf{18} (1975), 333--339.
	
	\bibitem{BriKe2020} W. Brian and J. P. Kelly. Linear operators with infinite entropy. \textit{J. Math. Anal. Appl.}, \textbf{487} (2) (2020), 123981.

	\bibitem{ChanSe2012} K. Chan and I. Seceleanu. Hypercyclicity of shifts as a zero-one law of orbital limit points. \textit{J. Operator Theory}, \textbf{67} (2012), 257--277.

	\bibitem{CiGoPu2021} P. R. Cirilo, B. Gollobit, and E. R. Pujals. Dynamics of generalized hyperbolic linear operators. \textit{Adv. Math.}, \textbf{387} (2021), 107830.

	\bibitem{Conley1972} C. Conley. The gradient structure of a flow: I. IBM Research, RC 3932 (\#17806), July 17, 1972; reprinted in \textit{Ergodic Theory Dyn. Systems}, \textbf{8} (1988), 11--26.
	
	\bibitem{Conley1976} C. Conley. Some aspects of the qualitative theory of differential equations. \textit{Dynamical Systems - An International Symposium}, \textbf{1} (1976), 1--12.
	
	\bibitem{Conley1978} C. Conley. \textit{Isolated Invariant Sets and the Morse Index}. CBMS Regional Conference Series in Mathematics, volume 38, 1978.
	
	\bibitem{CoMaPa2014} G. Costakis, A. Manoussos, and I. Parissis. Recurrent linear operators. \textit{Complex Anal. Oper. Theory}, \textbf{8} (2014), 1601--1643.

	\bibitem{DAnUMa2021} E. D'Aniello, U. B. Darji, and M. Maiuriello. Generalized hyperbolicity and shadowing in $L^p$ spaces. \textit{J. Differ. Equ.}, \textbf{298} (2021), 68--94.

	\bibitem{Gilmore2020} C. Gilmore. Linear Dynamical Systems. \textit{Irish Math. Soc. Bulletin}, \textbf{86} (2020), 47--77.

	\bibitem{GriLo2023} S. Grivaux and A. L\'opez-Mart\'inez. Recurrence properties for linear dynamical systems: An approach via invariant measures. \textit{J. Math. Pures Appl.}, \textbf{169} (2023), 155--188.
	
	\bibitem{GriLoPe2025_AMP} S.~Grivaux, A.~L\'opez-Mart\'inez, and A. Peris. Questions in linear recurrence I: The $T\oplus T$-recurrence problem. \textit{Anal. Math. Phys.}, \textbf{15} (1) (2025), 1--26.
	
	\bibitem{GriMa2014} S. Grivaux and \'E. Matheron. Invariant measures for frequently hypercyclic operators. \textit{Adv. Math.}, \textbf{265} (2014), 371--427.

	\bibitem{GrPa2023} K.-G. Grosse-Erdmann and D. Papathanasiou. Dynamics of weighted shifts on directed trees. \textit{Indiana Univ. Math. J.}, \textbf{72} (2023), 263--299.
	
	\bibitem{GrPa2023_arXiv} K.-G. Grosse-Erdmann and D. Papathanasiou. Chaotic weighted shifts on directed trees. Preprint (2023), arXiv:2303.03980.
	
	\bibitem{GrPe2011_book} K.-G. Grosse-Erdmann and A. Peris. \textit{Linear Chaos}. Springer, London 2011.
	
	\bibitem{JabJungStoc2012} Z. J. Jab\l o\'nski, I. B. Jung, and J. Stochel. \textit{Weighted shifts on directed trees}. Memoires of the American Mathematical Society, volume 216, no. 1017, 2012.

	\bibitem{Lopez2024_RinM} A.~L\'opez-Mart\'inez. Invariant measures from locally bounded obits. \textit{Results Math.}, \textbf{79} (5) (2024), 185.

	\bibitem{Lopez2024_IMRN} A.~L\'opez-Mart\'inez. Recurrent subspaces in Banach spaces. \textit{Int. Math. Res. Not.}, \textbf{2024} (11) (2024), 9067--9087.

	\bibitem{LoMe2025_JMAA} A.~L\'opez-Mart\'inez and Q.~Menet. Two remarks on the set of recurrent vectors. \textit{J. Math. Anal. Appl.}, \textbf{541} (1) (2025), 128686.
	
	\bibitem{Maiuriello2022} M. Maiuriello. Expansivity and strong structural stability for composition operators on $L^p$ spaces. \textit{Banach J. Math. Anal.}, \textbf{16} (4) (2022), 51.

	\bibitem{MenetPa2024_arXiv} Q.~Menet and D.~Papathanasiou. Dynamics of weighted shifts on $\ell^p$-sums and $c_0$-sums. Preprint (2024), arXiv:2408.11153.

	\bibitem{Seceleanu2010} I. Seceleanu. \textit{Hypercyclic operators and their orbital limit points}. PhD-thesis, Bowling Green State University, 2010.
	
	\bibitem{Shub1987_book} M. Shub. \textit{Global Stability of Dynamical Systems} (with the collaboration of A. Fathi and R. Langevin), Springer-Verlag, New York, 1987.
	
\end{thebibliography}
\end{document}